\documentclass[a4paper]{amsart}

\usepackage[a4paper, margin=2cm]{geometry}
\usepackage[british]{babel}

\usepackage{amsmath}
\usepackage{amssymb}
\usepackage{calc}
\usepackage{enumitem}
\usepackage{graphicx}
\usepackage{url}
\usepackage{xcolor}
\usepackage{mathtools}
\usepackage{hyperref}

\hypersetup{
  colorlinks   = true, 
  urlcolor     = {blue!50!black}, 
  linkcolor    = {blue!50!black}, 
  citecolor   = {red!50!black} 
}

\numberwithin{equation}{section}

\theoremstyle{plain}
\newtheorem{thm}{Theorem}[section]

\newtheorem{prop}[thm]{Proposition}
\newtheorem{lem}[thm]{Lemma}

\newtheorem*{lem*}{Lemma}
\newtheorem*{cor*}{Corollary}

\theoremstyle{definition}
\newtheorem{defn}[thm]{Definition}

\newtheorem{notn}[thm]{Notation}
\newtheorem{qn}[thm]{Question}
\newtheorem*{qn*}{Question}

\newtheorem*{term*}{Terminology}

\theoremstyle{remark}
\newtheorem{rem}[thm]{Remark}

\DeclareMathOperator{\Age}{Age}
\DeclareMathOperator{\Ao}{Age_\omega}

\DeclareMathOperator{\Aut}{Aut}

\DeclareMathOperator{\dom}{dom}

\DeclareMathOperator{\id}{id}

\DeclareMathOperator{\qftp}{qftp}

\newcommand{\mb}[1]{\mathbb{#1}}
\newcommand{\mc}[1]{\mathcal{#1}}
\newcommand{\ov}[1]{\overline{#1}}

\newcommand{\sub}{\subseteq}

\newcommand{\tld}[1]{\widetilde{#1}}

\newcommand{\fin}{\subseteq_{\text{fin\!}}}

\newcommand{\ra}{\rightarrow}

\newcommand{\la}{\leftarrow}

\newcommand{\h}[1]{\widehat{#1}}
\newcommand{\emp}{\varnothing}

\newcommand{\Fr}{Fra\"{i}ss\'{e} }

\newcommand{\Frthm}{Fra\"{i}ss\'{e}'s theorem }
\newcommand{\Ka}{Kat\v{e}tov }

\allowdisplaybreaks

\makeatletter
\def\author@andify{%
  \nxandlist {\unskip ,\penalty-1 \space\ignorespaces}%
    {\unskip {} \@@and~}%
    {\unskip \penalty-2 \space \@@and~}%
}
\makeatother

\thanks{The first author is funded by Project 24-12591M of the Czech Science Foundation (GA\v{C}R)}

\author{Rob Sullivan}
\address{\parbox{\linewidth}{Rob Sullivan\\
Institute of Computer Science, Czech Academy of Sciences\\
Pod Vodárenskou věží 271/2\\
182 00 Prague\\
Czech Republic
}}
\email{robertsullivan1990+maths@gmail.com}

\author{Jeroen Winkel}
\address{Jeroen Winkel}
\email{winkeljeroen+maths@gmail.com}

\subjclass[2020]{03C15, 20B27, 03C50, 18A22}

\keywords{extending automorphisms, universal automorphism group, Kat\v{e}tov functor}

\title{An unusual example of a universal automorphism group}

\date{\today}

\begin{document}

\begin{abstract}
    Let $M$ be a Fra\"{i}ss\'{e} structure (a countably infinite ultrahomogeneous structure). We refer to the class of structures embeddable in $M$ as the $\omega$-age of $M$. We consider the following two properties of $M$: we say that $M$ has a \emph{universal automorphism group} if, for each $A$ in the $\omega$-age of $M$, there is an embedding $\textrm{Aut}(A) \to \textrm{Aut}(M)$, and we say that $M$ has \emph{group-extensible $\omega$-age} if, for each $A$ in the $\omega$-age of $M$, there is an embedding $A \to M$ such that each automorphism of the image extends to an automorphism of $M$ and the extension map preserves group composition. It is immediate that if $M$ has group-extensible $\omega$-age, then $M$ has a universal automorphism group. We give an example of a Fra{\"{i}}ss{\'{e}} structure with a universal automorphism group whose $\omega$-age is not group-extensible, showing that the above two properties are not equivalent. 
\end{abstract}

\maketitle

\section{Introduction}

\subsection{Universal automorphism groups}

A first-order structure $M$ is \emph{ultrahomogeneous} if each isomorphism between finitely generated substructures of $M$ extends to an automorphism of $M$. We assume the reader is familiar with ultrahomogeneous structures: see \cite{Mac11}, \cite{Hod93} for background. In this paper, we are concerned with the \emph{universality} of automorphism groups of such structures. Given a countable ultrahomogeneous structure $M$, we define the \emph{$\omega$-age} of $M$, written $\Ao(M)$, to be the class of structures embeddable in $M$ (including infinite structures). We say that $\Aut(M)$ is \emph{universal} if, for each $A \in \Ao(M)$, there is a group embedding $\Aut(A) \to \Aut(M)$. Note that, a priori, the group embedding need not have any relation to the structure.

We call a countably infinite ultrahomogeneous structure $M$ a \emph{\Fr structure}. Many \Fr structures are known to have universal automorphism groups. M{\"{u}}ller proved in \cite{Mul16} that $\Aut(M)$ is universal for any \Fr structure $M$ with a local stationary independence relation: examples of such structures are those with free amalgamation, the generic poset and the rational Urysohn space. In addition, \cite[Theorem 4.4]{KSW25} gives that any linearly ordered \Fr structure with a local stationary weak independence relation has a universal automorphism group: examples include the generic ordered graph, the generic order expansion of the rational Urysohn space and the generic $n$-linear order (the \Fr limit of the class of finite structures $A = (\dom(A), <_0, \cdots, <_{n-1})$ where each $<_i$ is a linear order on $\dom(A)$). Other examples of \Fr structures with universal automorphism groups are the countable atomless Boolean algebra, the generic lattice, the generic semilattice and the random tournament (\cite{KM17}). See \cite{Hen71}, \cite{Kat88}, \cite{Usp90}, \cite{MW92}, \cite{Bil12}, \cite{BM13} for earlier results; there is a discussion of these in the introduction of \cite{KSW25}.

\subsection{Group-extensive embeddings} In each example in the literature known to the authors, the method for showing universality of $\Aut(M)$ is similar: we call it a \emph{tower construction}. We first define some terminology:
\begin{defn}
    Let $M$ be a countable ultrahomogeneous structure. An embedding $f : A \to M$ is \emph{group-extensive} if there exists a group embedding $\eta : \Aut(f(A)) \to \Aut(M)$ with $g \sub \eta(g)$ for all $g \in \Aut(f(A))$. If each $A \in \Ao(M)$ admits a group-extensive embedding $A \to M$, we say that $\Ao(M)$ is \emph{group-extensible}.
\end{defn}

For each $A \in \Ao(M)$, the tower construction builds a group-extensive embedding $A \to M$. Universality of $\Aut(M)$ then immediately results from the group-extensibility of $\Ao(M)$. The most general formulation of the tower construction is due to Kubi\'{s} and Ma\v{s}ulovi\'{c}, who provided a categorical version in \cite{KM17} via what they termed \emph{\Ka functors}.

We provide a simple example to illustrate the idea of how group-extensive embeddings are built using the tower construction, giving a sketch for the Rado graph, denoted by $\Gamma$. Given a countable graph $A$, inductively construct $A = M_0 \sub M_1 \sub \cdots$ as follows: to construct $M_k$ from $M_{k - 1}$, for each $F \fin M_{k - 1}$, we add a new vertex $v_F \in M_k$ adjacent to exactly $F$, and we do not add any edges between the new vertices. Note that, for $k \geq 1$, each $g \in \Aut(M_{k-1})$ extends uniquely to an element $\eta_k(g)$ of $\Aut(M_k)$ via $v_F \mapsto v_{g(F)}$. Also note that $\eta_k : g \mapsto \eta_k(g)$ is a group embedding $\Aut(M_{k-1}) \to \Aut(M_k)$. Let $M_\omega = \bigcup_{k < \omega} M_k$ and $\eta = \bigcup_{k < \omega} \eta_k$; then $\eta$ is a group embedding $\Aut(A) \to \Aut(M_\omega)$ with $\eta(g) \supseteq g$ for all $g \in \Aut(A)$. To show that there is an isomorphism $\iota : M_\omega \to \Gamma$, one checks that for each pair of finite disjoint sets $U, V \sub M_\omega$, there is some ``witness" $w \in M_\omega \setminus (U \cup V)$ with $w$ adjacent to all of $U$ and none of $V$: as $U, V \sub M_{k-1}$ for some $k$, the new vertex of $M_k$ whose $M_{k-1}$-neighbourhood is $U$ then gives the required witness $w$. The map $\iota \circ \eta : \Aut(A) \to \Aut(\Gamma)$ then gives the required group-extensive embedding.

\subsection{Failure of universality} It was unknown for a long time whether there was in fact any \Fr structure with a non-universal automorphism group. (Jaligot asked this in \cite{Jal07}.) The first such examples of non-universality were given in \cite{KS20} by Kubi{\'{s}} and Shelah. They showed that universality can fail dramatically, constructing for instance examples of \Fr structures $M$ in a finite relational language where $\Aut(M)$ is torsion-free and the class $\{\Aut(A) \mid A \in \Age(M)\}$ contains all finite symmetric groups. \cite[Theorem 5.1]{KSW25} gives more examples, showing that some well-known \Fr structures have non-universal automorphism groups: for example, the semigeneric tournament and the generic $n$-anticlique-free oriented graph.

\subsection{New results}

In \cite{KSW25}, the authors and A.\ Kwiatkowska considered a range of \Fr structures $M$, showing either the universality of $\Aut(M)$ via group-extensive embeddings (built with the tower construction, using the framework of \Ka functors from \cite{KM17}), or the non-universality of $\Aut(M)$ via ad hoc methods particular to the structure in question.

In discussions around \cite{KSW25}, Kwiatkowska, Barto{\v{s}}, Kubi{\'{s}} and Vaccaro asked the following:

\begin{qn} \label{q: Ola qn}
    Let $M$ be a \Fr structure. If $\Aut(M)$ is universal, does this imply that $\Ao(M)$ is group-extensible?
\end{qn}

This question resulted from the observation that, for all previous examples in the literature (those known to the authors, at least), universality of $\Aut(M)$ was shown by proving the group-extensibility of $\Ao(M)$. Another way of viewing this question is that it asks whether the existence of an abstract group embedding $\Aut(A) \to \Aut(M)$ must always be ``structurally witnessed" by an embedding $A \to M$.

The main result of this short paper gives a negative answer to Question \ref{q: Ola qn}.

\begin{thm} \label{t: main thm}
    There exists a \Fr structure $\mb{D}$ in a finite relational language, where $\mb{D}$ is transitive and has strong amalgamation, such that $\Aut(\mb{D})$ is universal and $\Ao(\mb{D})$ is not group-extensible.
\end{thm}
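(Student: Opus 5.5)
The approach is to construct $\mb{D}$ so that one structure $A \in \Ao(\mb{D})$ has no group-extensive embedding for a \emph{structural} reason, while $\Aut(\mb{D})$ still contains copies of every $\Aut(B)$ for abstract group-theoretic reasons. The candidate obstruction is a finite structure $A$ with a nontrivial automorphism $g$, say an involution, such that for every embedding $A \to \mb{D}$ every automorphism of $\mb{D}$ extending $g$ has strictly larger order, or such that extensions of $g$ and $h$ never commute although $g,h$ commute. I would take the first version. The structure $\mb{D}$ will be a Fra\"{i}ss\'{e} limit in a finite relational language (for instance a graph together with a ternary relation) whose amalgamation class forbids configurations encoding ``an order-$2$ symmetry of a copy of $A$ that is itself of order $2$ globally''.

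Concretely, let $\mb{D}$ expand a homogeneous graph by a relation $R$ such that any vertex pair swapped by an automorphism of $A$ forces, via extension axioms, a witness vertex $w$ with $R$ linking $w$ to the pair asymmetrically. An involutive extension would then have to map $w$ to another witness $w'$ with the reversed pattern. The class is designed so that the two witnesses cannot coexist, for example because a forbidden substructure involves both $w$ and $w'$. Then any extension of $g$ has infinite order, or at least order divisible by $4$, so no group embedding $\eta$ with $g \sub \eta(g)$ exists for $\Aut(A) \cong \mb{Z}/2$. I will check that the forbidden configurations are closed under free amalgamation after identifying common parts. This gives strong amalgamation, and transitivity follows because all one-point structures are isomorphic.

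For universality, I would show that $\Aut(\mb{D})$ contains a copy of a group $G$ that is universal for all $\Aut(B)$ with $B \in \Ao(\mb{D})$. The natural route is a tower construction in the spirit of the Rado graph sketch, applied not to $B$ itself but to a modified structure $B'$ in the $\omega$-age. Here $B'$ is $B$ with an extra ``twisting'' decoration added, chosen so that $\Aut(B) \hookrightarrow \Aut(B')$ and the obstruction disappears: each automorphism of $B$ is lifted to $B'$ together with a choice of witnesses made compatibly. The simplest version is to take $B' = B \times \mb{Z}$ with the order-$2$ maps replaced by shifts, which is harmless since an abstract embedding need not restrict to $g$. Alternatively, I would show that $\Aut(\mb{D})$ contains $\Aut(\mc{R})$ for some reduct-like Fra\"{i}ss\'{e} structure $\mc{R}$ already known to be universal, such as the Rado graph, into whose $\omega$-age every $B$ maps with a group-extensive embedding.

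The main obstacle is balancing the two halves. The forbidden configurations must be strong enough to kill every group-extensive embedding of $A$ under every embedding of $A$, not just a canonical one. At the same time the Kat\v{e}tov-style tower applied to $B'$ must never create them, so that the union is isomorphic to $\mb{D}$. I would first try making the obstruction depend on a parity invariant of witness orbits, which the tower respects when $g$ is lifted to an infinite-order map.
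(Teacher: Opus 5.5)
There are genuine gaps in both halves. The theorem is an existence statement, so the proof has to exhibit a concrete class, and yours never does.

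\textbf{The non-extensibility half.} Your first-version obstruction is the right one, and it is what the paper uses: an involution of a finite $A \in \Ao(\mb{D})$ none of whose extensions has order $2$. But the one design constraint you commit to destroys it. If, as you propose, the class is closed under free amalgamation, then the tower construction of the introduction goes through verbatim. At each stage, add for every finite $F$ and every one-point extension type over $F$ a new vertex of that type over $F$, with no relations to anything else. Every finite piece is an iterated free amalgam, so it stays in the class. Automorphisms extend canonically and compatibly with composition, and the union is $\mb{D}$. So every member of $\Ao(\mb{D})$ would have a group-extensive embedding. Concretely, $w$ and its image $gw$ can always be left unrelated, so no forbidden configuration containing both can be forced.

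The obstruction must instead forbid an \emph{absence} of relations, with an amalgamation that adds relations (strong but not free). That is the paper's choice. $R$ is an $I_3$-free oriented graph, and amalgams receive oriented edges between the new parts. Take $A$ on $\{a,b,c\}$ with edges $(a,b),(a,c)$. A vertex $v$ with $(v,f(b))$, $(f(c),v)$ and no edge to $f(a)$ is moved by any involution $\tau$ extending the swap of $f(b),f(c)$. Then $\{f(a),v,\tau(v)\}$ is independent, a contradiction.

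\textbf{The universality half.} This half has no working mechanism. Replacing the involutions of $\Aut(B)$ by shifts on $B \times \mathbb{Z}$ does not give a homomorphism from $\Aut(B)$ at all, since an injective homomorphism sends involutions to involutions. If instead $\Aut(B)$ acts on the first coordinate, the point stabilisers are those of $B$. Then an involution fixing one point and swapping two others still rules out any faithful action on a copy of $\mb{D}$ extending it, by the argument just given.

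Embedding the automorphism group of the Rado graph is not a realistic target either, because obstructions of this kind constrain $\Aut(\mb{D})$ itself. For the generic $I_3$-free oriented graph and for the paper's $\mb{D}$, the automorphism group has no pair of commuting involutions. This already makes the former non-universal, since $C_2 \times C_2$ is the automorphism group of a finite member of its age.

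Two ideas are missing.
\begin{enumerate}
\item You need extra structure restricting the automorphism groups of \emph{all} members of the $\omega$-age. The paper adds the $4$-ary semifinal relation $S$. It forces every element that stabilises a $4$-set setwise but not pointwise to act on it as the double transposition determined by its semifinal. In particular no automorphism swaps two points while fixing two others, and no $\Aut(B)$ contains $C_2 \times C_2$.
\item Instead of extending the action of $G = \Aut(A)$ from $A$ itself, build a faithful $G$-structure $M \cong \mb{D}$ from scratch on $A$-nice $G$-sets. In these, every point stabiliser is the pointwise stabiliser of at least $4$ points of $A$ (e.g.\ orbits of injective $4$-tuples).
\end{enumerate}
The semifinal on $A$ then guarantees two things on such sets: no element fixes a point while swapping two others, and $4$-set stabilisers act by a single double transposition. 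This is exactly what allows each one-point extension to be realised equivariantly while preserving $I_3$-freeness and the semifinal axioms.
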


(In the above theorem we use the following standard terminology: a countable ultrahomogeneous structure $M$ is \emph{transitive} if the permutation action $\Aut(M) \curvearrowright M$ is transitive, and $M$ has \emph{strong amalgamation} if, for each pair of embeddings $B \la A \ra C$ in $\Age(M)$, there is an amalgam $B \ra D \la C$ in $\Age(M)$ where the intersection of the images of $B$ and $C$ in $D$ is exactly the image of $A$.)

We show universality of $\Aut(\mb{D})$ via a new method: for each $A \in \Ao(\mb{D})$, rather than using the tower construction as in previous examples, we produce a group embedding $\Aut(A) \to \Aut(\mb{D})$ by constructing an $L$-structure $M \cong \mb{D}$ on which $\Aut(A)$ acts faithfully by automorphisms of $M$. We conjecture that this new method should be applicable more generally, showing the universality of $\Aut(M)$ for other \Fr structures $M$, though in this paper we only concern ourselves with the particular structure $\mb{D}$.

\begin{rem}
    \cite[Example Two, pg.\ 13]{HS19} can be adapted to give an example of a \emph{finite} ultrahomogeneous structure $B$ such that $\Aut(B)$ is universal and $\Age(B)$ is not group-extensible, as follows. Let $L_R = \{R\}$ be a language consisting of a binary relation symbol $R$, and let $B$ be the $L_R$-structure with domain $\{a, a', b_0, b_1, b_2, b_3\}$ and \[R^B = \{(a, b_0), (a, b_2), (a', b_1), (a', b_3), (a, a), (a', a')\} \cup \{(b_i, b_{i+1}) \mid i < 4\},\] where addition is (mod $4$). Note that $B$ is a digraph with loops. It is straightforward to check that $B$ is ultrahomogeneous and $\Aut(B)$ is a cyclic group of order $4$ with generator $f \in \Aut(B)$ specified by \[f(a) = a', f(a') = a, f(b_i) = b_{i+1} \text{ for } i < 4,\] and that $\Aut(B)$ is universal. Let $A$ be the substructure induced by $B$ on $\{a, a'\}$, and let $g \in \Aut(A)$ be the involution swapping $a, a'$. It is straightforward to see that $\id_A, g$ are the only embeddings of $A$ in $B$, and any $h \in \Aut(B)$ extending $g$ has order $4$. So $\Age(B)$ is not group-extensible. Note that $B$ is not transitive and does not have strong amalgamation. We thank W.\ Kubi\'{s} for pointing this example out to us.
\end{rem}

\section{Notation and terminology} \label{s: action notn and term}

We write $\lambda : G \curvearrowright V$ for an action of a group $G$ on a set $V$. For brevity, we will often say that $V$ is a \emph{$G$-set} to mean that $V$ is equipped with an action $\lambda : G \curvearrowright V$. We write $gv$ to mean $g \cdot v$, and we extend this notation to sets in the natural manner: for $U \sub V$, we write $gU = \{gu \mid u \in U\}$. For $v \in V$ we also write $Gv = \{gv \mid g \in G\}$. Let $S$ be a structure with domain $V$. If the action $\lambda : G \curvearrowright V$ is by automorphisms of $S$, we will say that $S$ is a \emph{$G$-structure}. When we refer to a $G$-structure with domain $V$, where $V$ is a $G$-set, we always assume the actions on the $G$-structure and on the $G$-set are the same action. We also use this terminology with the specific name of a structure: for example, a $G$-oriented graph is an oriented graph equipped with an action of $G$ by automorphisms.
    
Let $V$ be a $G$-set. For $U \sub V$, we write $G_{(U)}$ for the pointwise-stabiliser of $U$ and $G_{\{U\}}$ for the setwise-stabiliser of $U$. We say that $V$ is \emph{transitive} if it is a single orbit, and \emph{faithful} if the action $G \curvearrowright V$ is faithful.

Given a set $V$, we write $[V]^n$ for the collection of subsets of $V$ of size $n$, each of which we call an \emph{$n$-set}. Given an action $\lambda : G \curvearrowright V$, we often consider the element-wise action of $G$ \emph{induced} by $\lambda$ on $[V]^n$: the action $G \curvearrowright [V]^n$ induced by $\lambda$ is defined by $g \cdot U = \{gu \mid u \in U\}$, for $U \in [V]^n$.
    
Given a structure $A$, we let $\rho_A$ denote the natural permutation action $\Aut(A) \curvearrowright A$.

\section{The structure}

\begin{defn} \label{str def}
    Let $L_R = \{R\}$ be a language consisting of a binary relation symbol $R$. We call an $L_R$-structure $A$ an \emph{$I_3$-free oriented graph} if $R^A$ is an oriented graph relation (irreflexive and asymmetric) and the substructure on any $3$-set contains at least one oriented edge.

    Let $L_S$ be a language consisting of a $4$-ary relation symbol $S$. We say that an $L_S$-structure $A$ is a \emph{semifinal structure} if $S^A = \bigcup_{B \sub A, |B| = 4} S^B$ and, for each $B \sub A$, $|B| = 4$, there is an enumeration $b_0, b_1, b_2, b_3$ of the vertices of $B$ such that \[S^B = \{(b_0, b_1, b_2, b_3), (b_2, b_3, b_0, b_1)\}.\] We refer to $S^B$ as the \emph{semifinal on $B$}. (The name \emph{semifinal structure} is intended to evoke the semifinals of a football tournament where $b_0$ wins against $b_1$ and $b_2$ wins against $b_3$.)

    Let $L = L_R \cup L_S$. We let $\mc{D}$ denote the class of finite $L$-structures $A$ such that $(\dom(A), R^A)$ is an $I_3$-free oriented graph and $(\dom(A), S^A)$ is a semifinal structure. It is straightforward to see that the class $\mc{D}$ has strong amalgamation: given $B, C \in \mc{D}$ with $B \cap C = A$, define an amalgam of $B$, $C$ over $A$ by adding oriented edges $(b, c)$ for all $b \in B \setminus A$, $c \in C \setminus A$ and by defining $S$ arbitrarily on any set of four vertices which is not contained in $B$ or in $C$. We denote the \Fr limit of $\mc{D}$ by $\mb{D}$, and we let $\ov{\mc{D}} = \Ao(\mb{D})$.
\end{defn}

We first give conditions for a $G$-set to admit a $G$-structure in $\ov{\mc{D}}$; we use these in Lemmas \ref{l: extend I_3-free to 4-tuples of A} and \ref{l: ind sf}.

\begin{lem} \label{l: cond for I_3-free}
    Let $G$ be a group, and $V$ a $G$-set. The following are equivalent:
    \begin{enumerate}[label=(\roman*)]
        \item \label{i: no swap cond} for all $U \in [V]^3$, there are distinct $u, v \in U$ such that there is no $g \in G$ swapping $u, v$;
        \item \label{i: I_3 free extra cond} there is an $I_3$-free $G$-oriented graph $A$ with domain $V$ such that, for all distinct $u, v \in V$, if there is no edge between $u, v$, then there is $g \in G$ swapping $u, v$;
        \item \label{i: I_3 free cond} there is an $I_3$-free $G$-oriented graph $A$ with domain $V$. 
    \end{enumerate}
\end{lem}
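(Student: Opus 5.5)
We prove the cycle of implications (ii) $\Rightarrow$ (iii) $\Rightarrow$ (i) $\Rightarrow$ (ii). The implication (ii) $\Rightarrow$ (iii) is trivial.

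For (iii) $\Rightarrow$ (i), let $A$ be an $I_3$-free $G$-oriented graph with domain $V$ and take $U \in [V]^3$. By $I_3$-freeness there is an oriented edge $(u,v) \in R^A$ with $u, v \in U$. If some $g \in G$ swapped $u$ and $v$, then since $G$ acts by automorphisms of $A$ we would have $(v,u) = (gu, gv) \in R^A$. This contradicts asymmetry of $R^A$. So no $g$ swaps $u, v$, which is (i).

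The main work is (i) $\Rightarrow$ (ii), where we must construct the orientation equivariantly. Call an unordered pair $\{u,v\} \in [V]^2$ \emph{swappable} if some $g \in G$ swaps $u$ and $v$. Swappability is $G$-invariant: if $g$ swaps $u, v$, then $hgh^{-1}$ swaps $hu, hv$. So the non-swappable pairs form a union of $G$-orbits on $[V]^2$, and $G$ also acts on ordered pairs. For a non-swappable pair $\{u,v\}$, the ordered pairs $(u,v)$ and $(v,u)$ lie in distinct $G$-orbits. Indeed, if $g(u,v) = (v,u)$ then $g$ swaps $u,v$.

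We then choose, for each $G$-orbit $O$ of non-swappable 2-sets, a representative $\{u_O, v_O\}$, and define $R^A$ to be the union over such $O$ of the $G$-orbits of $(u_O, v_O)$. We must check that this is well defined as an oriented graph.

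Irreflexivity is clear, since the pairs have distinct entries. For asymmetry, suppose $(x,y)$ and $(y,x)$ both lie in $R^A$. Both have underlying set $\{x,y\}$, which lies in a single orbit $O$. So both ordered pairs lie in $G(u_O,v_O)$, and hence $(x,y)$ and $(y,x)$ are in the same $G$-orbit. Then some $g$ swaps $x, y$, contradicting non-swappability.

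Every non-swappable pair receives exactly one orientation. Any non-swappable $\{x,y\}$ equals $g\{u_O,v_O\}$ for some $g$, so either $(x,y)$ or $(y,x)$ equals $g(u_O,v_O)$. $R^A$ is $G$-invariant by construction, so $A$ is a $G$-oriented graph.

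Finally, non-edges are exactly the swappable pairs, which is the extra condition in (ii). By (i), every $3$-set contains a non-swappable pair, hence an edge, so $A$ is $I_3$-free. The only delicate point is the well-definedness (asymmetry) check above, which rests on the observation that reversing a non-swappable ordered pair changes its $G$-orbit; the choice of representatives uses the axiom of choice, which is harmless here.
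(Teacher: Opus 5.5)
Your proof is correct and follows essentially the same route as the paper: orient one representative of each $G$-orbit of non-swappable $2$-sets, take its $G$-orbit, and obtain (iii) $\Rightarrow$ (i) from asymmetry. Your explicit asymmetry check just spells out the well-definedness that the paper asserts in one line.
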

\begin{proof}
    \ref{i: no swap cond} $\Rightarrow$ \ref{i: I_3 free extra cond}: Define an oriented graph $A = (V, R^A)$ as follows: for each orbit $O \sub [V]^2$ of the element-wise action $G \curvearrowright [V]^2$ induced by $G \curvearrowright V$, take an orbit representative $\{u, v\}$, and if there is no $g \in G$ swapping $u, v$, orient $\{u, v\}$ as $(u, v)$ and add $G(u, v)$ to $R^A$. As there is no $g \in G$ swapping $u, v$, this gives a well-defined orientation on each pair of vertices in $O$. If there is $g \in G$ swapping $u, v$, do not add an edge to $R^A$ for any pair of vertices in $O$. It is immediate that $(V, R^A)$ is a $G$-oriented graph such that, for all distinct $u, v \in V$, if there is no edge between $u, v$, then there is $g \in G$ swapping $u, v$, and \ref{i: no swap cond} immediately gives that $(V, R^A)$ is $I_3$-free.

    \ref{i: I_3 free extra cond} $\Rightarrow$ \ref{i: I_3 free cond}: trivial.
    
    \ref{i: I_3 free cond} $\Rightarrow$ \ref{i: no swap cond}: As $A$ is $I_3$-free, for any $U \sub A$, $|U| = 3$, there are $u, v \in U$ with $R(u, v)$, and thus there is no $g \in G$ swapping $u, v$, as $G$ acts by automorphisms of $A$.
\end{proof}

\begin{lem} \label{l: double tr from sf}
    Let $G$ be a group, and $V$ a $G$-set. The following are equivalent:
    \begin{enumerate}[label=(\roman*)]
        \item \label{i: double tr cond} for each $U \in [V]^4$, there is an enumeration $v_0, v_1, v_2, v_3$ of $U$ such that the action of each element of $G_{\{U\}} \setminus G_{(U)}$ on $U$ is the double transposition $(v_0 v_2)(v_1 v_3)$;
        \item \label{i: sf on dom} there is a $G$-semifinal structure $A$ with domain $V$.
    \end{enumerate}
\end{lem}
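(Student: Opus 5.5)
The plan mirrors the proof of Lemma \ref{l: cond for I_3-free}: one direction reads the condition off the invariance of $S^A$, the other builds $S$ orbit by orbit from chosen representatives.

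\ref{i: sf on dom} $\Rightarrow$ \ref{i: double tr cond}. Let $A$ be a $G$-semifinal structure with domain $V$ and fix $U \in [V]^4$. Write the semifinal on $U$ as $S^U = \{(v_0,v_1,v_2,v_3),(v_2,v_3,v_0,v_1)\}$. Any $g \in G_{\{U\}}$ restricts to an automorphism of $A|_U$, so $g$ preserves $S^U$. Hence $g$ either fixes the tuple $(v_0,v_1,v_2,v_3)$ or maps it to $(v_2,v_3,v_0,v_1)$. In the first case $g$ fixes $U$ pointwise. In the second case $g$ acts on $U$ as $(v_0 v_2)(v_1 v_3)$. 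This is exactly \ref{i: double tr cond}.

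\ref{i: double tr cond} $\Rightarrow$ \ref{i: sf on dom}. Consider the induced action $G \curvearrowright [V]^4$ and pick a representative $U$ of each orbit. Take the enumeration $v_0,\dots,v_3$ of $U$ given by \ref{i: double tr cond}. Set $S^U = \{(v_0,v_1,v_2,v_3),(v_2,v_3,v_0,v_1)\}$, and for $hU$ in the orbit set $S^{hU} = hS^U$. Finally let $S^A$ be the union of all the $S^{U'}$ over $U' \in [V]^4$.

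The main step is checking that this is well defined. Suppose $hU = h'U$. Then $h^{-1}h' \in G_{\{U\}}$, so either it lies in $G_{(U)}$ or it acts on $U$ as $(v_0 v_2)(v_1 v_3)$. Both of these preserve the set $S^U$, so $h'S^U = hS^U$. Each $S^{U'}$ is then a semifinal on $U'$, with enumeration $hv_0,\dots,hv_3$. Invariance under $G$ holds by construction: $gS^{hU} = S^{ghU}$, so $G$ acts by automorphisms.

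Nothing here seems hard. The only point needing care is the well-definedness step, and it is exactly what condition \ref{i: double tr cond} is designed to guarantee.
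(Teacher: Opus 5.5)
Your proposal is correct and follows essentially the same route as the paper: you build $S$ from one enumerated representative per orbit of $G \curvearrowright [V]^4$, giving exactly $G(v_0,v_1,v_2,v_3) \cup G(v_2,v_3,v_0,v_1)$. For the converse you read condition (i) off the $G$-invariance of the semifinal on each $4$-set. Your explicit check that $h^{-1}h' \in G_{\{U\}}$ preserves $S^U$ is precisely the ``straightforward verification'' the paper leaves implicit.
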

\begin{proof}
    \ref{i: double tr cond} $\Rightarrow$ \ref{i: sf on dom}: For each orbit $O$ of $G \curvearrowright [V]^4$, we enumerate an orbit representative of $O$ as $v_0, v_1, v_2, v_3$ so that condition \ref{i: double tr cond} of Lemma \ref{l: double tr from sf} is satisfied, and add $G(v_0, v_1, v_2, v_3) \cup G(v_2, v_3, v_0, v_1)$ to $S^A$. It is straightforward to verify that this gives a $G$-semifinal structure $A = (V, S^A)$.

    \ref{i: sf on dom} $\Rightarrow$ \ref{i: double tr cond}: Let $U \in [V]^4$, and let $v_0, v_1, v_2, v_3$ be an enumeration of $U$ with $(v_0, v_1, v_2, v_3) \in S^A$; condition \ref{i: double tr cond} for $U$ immediately follows from the fact that $A$ is a $G$-semifinal structure, so any $g \in G_{\{U\}} \setminus G_{(U)}$ preserves $S^A$.
\end{proof}

We use the \ref{i: sf on dom} $\Rightarrow$ \ref{i: double tr cond} direction of the above Lemma \ref{l: double tr from sf} frequently throughout the paper -- as we use it so often and its proof is straightforward, we do not explicitly reference it, and simply state, for example, ``this follows from the semifinal on $U$".

Note that Lemmas \ref{l: cond for I_3-free} and \ref{l: double tr from sf} imply that, given a countable $G$-set $V$, there is a $G$-structure $A \in \ov{\mc{D}}$ with domain $V$ if and only if condition \ref{i: no swap cond} of Lemma \ref{l: cond for I_3-free} and condition \ref{i: double tr cond} of Lemma \ref{l: double tr from sf} both hold.

\begin{lem} \label{l: groups with D str}
    Let $G$ be a countable group, and consider $G$ as a $G$-set via the left-multiplicative action. Then there is a $G$-structure $A \in \ov{\mc{D}}$ with domain $G$ if and only if $G$ does not have a subgroup isomorphic to $C_2 \times C_2$ or $C_4$.
\end{lem}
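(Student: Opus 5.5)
By the remark following Lemma \ref{l: double tr from sf}, a $G$-structure $A \in \ov{\mc{D}}$ with domain $G$ exists if and only if the left-multiplicative action $G \curvearrowright G$ satisfies two conditions: condition \ref{i: no swap cond} of Lemma \ref{l: cond for I_3-free}, and condition \ref{i: double tr cond} of Lemma \ref{l: double tr from sf}. (Countability of $G$ is what places the resulting structure in $\ov{\mc{D}}$.) The plan is to translate each condition into a group-theoretic statement about the regular action. Throughout I will use that right multiplication $x \mapsto xh$ commutes with the left action, so any configuration can be translated so that it contains $1$.

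\emph{The $I_3$-condition.} An element $g$ swaps distinct $u, v$ exactly when $gu = v$ and $gv = u$. This forces $g = vu^{-1}$, and then $gv = u$ holds if and only if $(vu^{-1})^2 = 1$. So $u, v$ can be swapped if and only if $vu^{-1}$ is an involution.

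Suppose condition \ref{i: no swap cond} fails for some $3$-set. After translating, this set has the form $\{1, a, b\}$ with $a$, $b$ and $ba^{-1} = ba$ all involutions. Then $ab = (ba)^{-1} = ba$, so $\{1, a, b, ab\}$ is a subgroup isomorphic to $C_2 \times C_2$. Conversely, if $\{1, a, b, ab\} \cong C_2 \times C_2$ is a subgroup, the $3$-set $\{1, a, b\}$ violates \ref{i: no swap cond}. Hence condition \ref{i: no swap cond} holds if and only if $G$ has no subgroup $C_2 \times C_2$.

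\emph{The semifinal condition.} Let $U \in [G]^4$. The action is free, so $G_{(U)}$ is trivial, and $H = G_{\{U\}}$ acts freely on $U$. Its orbits on $U$ therefore all have size $|H|$, which gives $|H| \in \{1, 2, 4\}$.
\begin{itemize}
\item If $|H| = 1$, condition \ref{i: double tr cond} holds vacuously.
\item If $|H| = 2$, the non-identity element acts fixed-point-freely on $U$. It is therefore a double transposition, and we choose the enumeration of $U$ to match it.
\item If $|H| = 4$, then $H$ acts faithfully on $U$, so its three non-identity elements induce three distinct permutations of $U$. They cannot all equal a single double transposition, so \ref{i: double tr cond} fails.
\end{itemize}
Thus \ref{i: double tr cond} fails exactly when some $4$-set has setwise stabiliser of order $4$. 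Such a stabiliser is a subgroup of order $4$. Conversely, any subgroup $H$ of order $4$ is its own setwise stabiliser as a $4$-set, since $hH = H$ for all $h \in H$. So \ref{i: double tr cond} holds if and only if $G$ has no subgroup of order $4$, that is, none isomorphic to $C_4$ or $C_2 \times C_2$.

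Combining the two equivalences gives the lemma. In fact the second condition already implies the first. I expect no serious obstacle. The only points needing care are:
\begin{itemize}
\item the freeness and orbit-size argument that rules out $|H| = 4$;
\item the computation showing that swappability of $u, v$ is equivalent to $vu^{-1}$ being an involution.
\end{itemize}
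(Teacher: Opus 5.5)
Your proof is correct and follows the paper's argument. Like the paper, you reduce via the remark after Lemma \ref{l: double tr from sf} to the two orbit conditions, test the $I_3$-condition on translates $\{1,a,b\}$, and use a subgroup of order $4$ as a $4$-set. Your orbit-counting step ($G_{\{U\}}$ acts freely on $U$, so $|G_{\{U\}}| \in \{1,2,4\}$) packages the paper's ``$4$-cycle or double transposition'' case split a little more cleanly, and it makes explicit that the semifinal condition alone already forces the $I_3$-condition here.
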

\begin{proof}
    $\Rightarrow$: Let $H \leq G$, $|H| = 4$. By Lemma \ref{l: double tr from sf}, the $G$-set $G$ satisfies condition \ref{i: double tr cond} of the lemma, and so as the setwise-stabiliser of $H$ contains $H$ itself, we have $H \not\cong C_2 \times C_2$ and $H \not\cong C_4$.

    $\Leftarrow$: As observed immediately before the statement of the current lemma, it suffices to show that condition \ref{i: no swap cond} of Lemma \ref{l: cond for I_3-free} and condition \ref{i: double tr cond} of Lemma \ref{l: double tr from sf} hold. We first show condition \ref{i: no swap cond} of Lemma \ref{l: cond for I_3-free}. Let $g, h \in G \setminus \{1\}$, $g \neq h$. Consider the $3$-set $\{1, g, h\}$: if there are elements of $G$ swapping each pair of elements, then we have $g^2 = 1$, $h^2 = 1$ and $gh = hg$, so $\{1, g, h, gh\} \cong C_2 \times C_2$, contradiction. So condition \ref{i: no swap cond} of Lemma \ref{l: cond for I_3-free} holds for $\{1, g, h\}$, and thus by translation for each $3$-set. Let $U \in [G]^4$. For $g \in G$, if $g$ fixes a point of $U$ then $g = 1$, so each element of $G_{\{U\}} \setminus G_{(U)}$ acts as a $4$-cycle or a double transposition on $U$: as $G$ does not contain a subgroup isomorphic to $C_4$ or $C_2 \times C_2$, each element of $G_{\{U\}} \setminus G_{(U)}$ must act on $U$ as the same double transposition, so we have condition \ref{i: double tr cond} of Lemma \ref{l: double tr from sf}.
\end{proof}

\section{The main result}

\subsection{Failure of group-extensibility}
\begin{prop} \label{p: omega-age of D not group-extensible}
    $\ov{\mc{D}}$ is not group-extensible.
\end{prop}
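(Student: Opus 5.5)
The plan is to exhibit one explicit $A \in \ov{\mc{D}}$ and show that no embedding $A \to \mb{D}$ is group-extensive. The tool is the pair of local constraints in Lemmas \ref{l: cond for I_3-free} and \ref{l: double tr from sf}, applied to the whole of $\mb{D}$. Suppose $f : A \to \mb{D}$ and $\eta : \Aut(f(A)) \to \Aut(\mb{D})$ are as in the definition. Then $H = \eta(\Aut(f(A)))$ is a subgroup of $\Aut(\mb{D})$ isomorphic to $\Aut(A)$, and the $H$-set $\mb{D}$ satisfies condition \ref{i: no swap cond} of Lemma \ref{l: cond for I_3-free} and condition \ref{i: double tr cond} of Lemma \ref{l: double tr from sf}. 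The point is that $\eta$ being a homomorphism fixes the isomorphism type of the extending group, for instance making extensions of involutions again involutions. This is exactly what fails for an arbitrary family of extensions supplied by ultrahomogeneity.

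As a first candidate I would take the simplest possible $A$: two vertices $a, a'$ with no edge. Then $\Aut(A) = \{\id, g\}$ with $g$ swapping $a, a'$, so $h = \eta(g)$ is an involution of $\mb{D}$ swapping $a$ and $a'$. The first step is to record the rigid consequences of this.
\begin{enumerate}[label=(\alph*)]
    \item If $h$ fixed two points $c, d$, it would act on $\{a,a',c,d\}$ as a transposition, contradicting the semifinal there. So $h$ has at most one fixed point.
    \item For every non-fixed $x$, the pair $\{x, hx\}$ is swapped by $h$, so $x$ and $hx$ are non-adjacent.
    \item For non-fixed $x, y$ in distinct $h$-orbits, the semifinal on $\{x,hx,y,hy\}$ must pair $x$ with $y$ or with $hy$, never $x$ with $hx$.
\end{enumerate}

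The second step is to play these constraints against the extension property of $\mb{D}$, for example on the set $N = \{x : R(a,x) \wedge R(a',x)\}$. The set $N$ is $h$-invariant, and by genericity $N$ induces a copy of $\mb{D}$. I expect this step to be the main obstacle. Genericity produces finite configurations, but it cannot dictate where $h$ sends a given point, so the contradiction must come from counting or pigeonhole over many points. What I would try first is to choose a finite configuration $x_1, \dots, x_n$ in $N$ such that the semifinals on the sets $\{a,a',x_i,x_j\}$ force, by (c), a pairing incompatible with $h$ permuting $\{x_1, \dots, x_n\} \cup \{hx_1, \dots, hx_n\}$. The supporting tools are Lemma \ref{l: groups with D str}, which forbids $C_2 \times C_2$ and $C_4$ in free orbits, and the fact that every pair $\{x, hx\}$ is a non-edge. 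With the latter, I$_3$-freeness limits how many $h$-orbits can meet a given set of pairwise non-adjacent vertices.

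If the two-point $A$ turns out to admit a group-extensive embedding, I would enlarge $A$ so that its own structure pins down the semifinals that (c) forbids. A natural choice is a four-point $A = \{a,a',b,b'\}$ with non-edges $a$--$a'$ and $b$--$b'$, semifinal pairing $a$ with $b$ and $a'$ with $b'$, and $g = (a\,a')(b\,b') \in \Aut(A)$. Alternatively, I would take a suitable infinite $A$ whose automorphism group contains several such involutions. The aim is that the homomorphism property then forces a product of two extended involutions to act on some $4$-set of $\mb{D}$ as a $4$-cycle or as a second double transposition, which contradicts Lemma \ref{l: double tr from sf}.
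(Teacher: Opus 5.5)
There is a genuine gap: the choice of $A$ is wrong. Both of your concrete candidates actually admit group-extensive embeddings, so no counting or pigeonhole argument can produce a contradiction for them.

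Your own observations (a) and (b) already show that every involution $h \in \Aut(\mb{D})$ is fixed-point-free. Two fixed points $c,d$ would make $h$ a transposition on a $4$-set $\{c,d,x,hx\}$. A unique fixed point $c$, together with any $v \neq c$ non-adjacent to $c$, gives the anticlique $\{c,v,hv\}$. Now $\Aut(\mb{D})$ does contain involutions, since the proof of Proposition \ref{p: Aut(D) univ} embeds $C_6$. So for any such $h$ and any $x$, the non-edge $\{x,hx\}$ is swapped by $h$, and this is a group-extensive embedding of your two-point $A$.

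Your four-point $A$ fails too. Here $\Aut(A)=\{\id,g\}$ acts freely, so every point-stabiliser equals $G_{(A)}=1$ and $A$ is itself $A$-nice. Running the proof of Proposition \ref{p: Aut(D) univ} with $M_0 = A$ then gives $M\cong\mb{D}$ with an action of $\Aut(A)$ extending $\rho_A$; the inductive step, Lemma \ref{l: extend by 1-type}, only needs an $A$-nice $G$-structure. The same holds for any finite $A$ with $|A|\ge 4$ on which no non-identity automorphism fixes a point. So strategies built around free involutions, or products of them as in your infinite fallback, are aimed in the wrong direction. That fallback is in any case not specified enough to check.

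The missing idea is to make the involution \emph{fix a point of $A$}. The paper takes $A=\{a,b,c\}$ with edges $(a,b),(a,c)$ and the involution swapping $b,c$ and fixing $a$. Let $\tau$ be an involution of $\mb{D}$ extending it. The extension property gives $v$ non-adjacent to $f(a)$ with $(v,f(b))$ and $(f(c),v)$, so $\tau v\neq v$. Then $\{f(a),v,\tau v\}$ has no edge, contradicting $I_3$-freeness.

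Your tools suffice for this $A$ without the special choice of $v$. For any $v\neq f(a)$ non-adjacent to $f(a)$, your argument (a) applied to $\{f(a),v,f(b),f(c)\}$ shows $\tau v\neq v$. Then (b), together with $\tau f(a)=f(a)$, gives the anticlique. The homomorphism property is used exactly as you anticipated, to force the extension to be an involution, but it only yields a contradiction when that involution has a fixed point.
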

\begin{proof}
    This is similar to the proof of \cite[Lemma 3.21]{KSW25}. Let $A \in \mc{D}$ with domain $\{a, b, c\}$ and oriented edges $(a, b), (a, c)$ (note that as $|A| = 3$, there is no semifinal on $A$). Let $f : A \to \mb{D}$ be an embedding. By the extension property, there is $v \in \mb{D}$ with oriented edges $(v, f(b)), (f(c), v)$ and no edge between $v$ and $f(a)$. (There is also necessarily a semifinal on $\{v, f(a), f(b), f(c)\}$, but it will not play a role in the argument.) Suppose for a contradiction that $f$ is group-extensive. Then the involution of $f(A)$ swapping $f(b), f(c)$ and fixing $f(a)$ extends to an involution $\tau \in \Aut(\mb{D})$. As $v$ is oriented oppositely to $f(b)$ and $f(c)$, we have $\tau(v) \neq v$. But then $\{f(a), v, \tau(v)\}$ has no oriented edge, contradiction.
\end{proof}

\begin{rem}
    Let $M$ be the generic $I_3$-free oriented graph (the \Fr limit of the class of finite $I_3$-free oriented graphs). The same argument as in the preceding proof shows that $\Ao(M)$ is not group-extensible. In fact $\Aut(M)$ is not universal: in the proof of \cite[Proposition 3.23]{KSW25}, it is shown that $\Aut(M)$ does not contain a pair of commuting involutions, and as there is an element of $\Age(M)$ with automorphism group $C_2 \times C_2$, this gives the non-universality of $\Aut(M)$. In the case of the structure $\mb{D}$ of the present paper, the proof of \cite[Proposition 3.23]{KSW25} also shows that $\Aut(\mb{D})$ does not contain a pair of commuting involutions. But it is not difficult to see from Lemma \ref{l: double tr from sf} (which relies on the semifinal structure) that, for each $A \in \ov{\mc{D}}$, the group $\Aut(A)$ does not have a subgroup $\cong C_2 \times C_2$. We add the semifinal relation $S$ specifically to prevent failure of universality.
\end{rem}

\subsection{Universality of \texorpdfstring{$\Aut(\mb{D})$}{Aut(D)}} \hfill

\subsubsection{Overview of the construction.} Let $A \in \ov{\mc{D}}$ and let $G = \Aut(A)$. We produce a group embedding $G \to \Aut(\mb{D})$ by constructing a faithful $G$-structure $M \cong \mb{D}$. We build $M$ inductively as the union of a chain $M_0 \sub M_1 \sub \cdots$: given $M_{i-1} \in \ov{\mc{D}}$, a faithful $G$-structure $M_{i-1}$ and a one-point extension $C \to E$, where $C \fin M_{i-1}$, $E \in \mc{D}$, we extend $M_{i-1}$ (and the action) to a faithful $G$-structure $M_i \in \ov{\mc{D}}$ containing a realisation of $C \to E$. Via a standard bookkeeping argument, we ensure for all $i < \omega$ that each one-point extension over a finite substructure of $M_i$ has a realisation in some $M_j$, $j > i$, which shows that $M = \bigcup_{i < \omega} M_i$ has the extension property for $\mc{D}$; thus $M \cong \mb{D}$.

The \textbf{key idea} of the construction is the observation that there are faithful actions $G \curvearrowright V$ where, for each $v \in V$, the point-stabiliser of $v$ is equal to the pointwise-stabiliser of a finite subset of $A$ of size $\geq 4$ in the permutation action $\rho_A$. We say that such actions are \emph{$A$-nice}. For example, the coordinate-wise action $G \curvearrowright (A)^n$ induced by $\rho_A$ is $A$-nice for each $n \geq 4$. We build the $M_i$ using this: we define $L$-structures $M_i$ on $A$-nice $G$-sets $V$, ensuring that $M_i \in \ov{\mc{D}}$ and that the action is via automorphisms. The fact that each point-stabiliser in $V$ is equal to the pointwise-stabiliser of some $A' \fin A$, $|A'| = 4$, gives us ``enough information from the structure $A$" to build the $L$-structure $M_i$ on the $G$-set $V$ (bearing in mind that the semifinal relation $S$ is $4$-ary): using the fact that the oriented graph relation $R^A$ of $A$ is $I_3$-free and $(\dom(A), S^A)$ is a semifinal structure, we construct $M_i$ such that $(\dom(M_i), R^{M_i})$ is an $I_3$-free oriented graph and $(\dom(M_i), S^{M_i})$ is a semifinal structure.

\subsubsection{Building structure on nice \texorpdfstring{$G$}{G}-sets}

For this section and the next section, we fix $A \in \ov{\mc{D}}$ with $|A| \geq 4$ and let $G = \Aut(A)$.

\begin{defn} \label{d: nice}
    We say that a $G$-set $V$ is \emph{$A$-nice} if the action $G \curvearrowright V$ is faithful and, for each $v \in V$, there is $A' \fin A$, $|A'| \geq 4$, with $G_v = G_{(A')}$.

    Let $V$ be an $A$-nice $G$-set. For $v \in V$, we define $\h{v} = \{a \in A \mid ga = a \text{ for all } g \in G_v\}$. As $V$ is $A$-nice, for each $v \in V$ we have $|\h{v}| \geq 4$ and $G_v = G_{(\h{v})}$. It is also straightforward to see that for $g \in G$, $v \in V$, we have $\h{gv} = g\h{v}$.
\end{defn}

\begin{lem} \label{l: extend I_3-free to 4-tuples of A}
    Let $V$ be an $A$-nice $G$-set. Then there exists an $I_3$-free $G$-oriented graph relation $(V, R)$ satisfying the condition that, for all distinct $u, v \in V$, if there is no edge between $u, v$ then there is $g \in G$ swapping $u, v$.
\end{lem}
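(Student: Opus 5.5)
The plan is to reduce the statement to Lemma \ref{l: cond for I_3-free}. By the implication \ref{i: no swap cond} $\Rightarrow$ \ref{i: I_3 free extra cond} of that lemma, applied to the $G$-set $V$, it suffices to verify condition \ref{i: no swap cond}. That is, for every $U \in [V]^3$ we must find distinct $u, v \in U$ such that no element of $G$ swaps them. The lemma then directly produces an $I_3$-free $G$-oriented graph with the required property that non-adjacent pairs are swappable.

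So suppose for a contradiction that $U = \{u, v, w\}$ and that $g, h, k \in G$ swap $\{u,v\}$, $\{v,w\}$ and $\{u,w\}$ respectively. We transfer this to $A$ using niceness. Since $g$ swaps $u, v$, we have $g^2 \in G_u = G_{(\h{u})}$, and $g\h{u} = \h{v}$, $g\h{v} = \h{u}$. The information we extract is about how elements of $G = \Aut(A)$ act on the finite sets $\h{u}, \h{v}, \h{w} \subseteq A$. By the implication \ref{i: I_3 free cond} $\Rightarrow$ \ref{i: no swap cond} of Lemma \ref{l: cond for I_3-free}, applied to $\rho_A$ and the $I_3$-free graph $R^A$, no $3$-set of $A$ has all its pairs swapped by elements of $G$. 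By Lemma \ref{l: double tr from sf}, applied to $\rho_A$ and $S^A$, the setwise stabiliser of a $4$-set of $A$ acts on it only through a single fixed double transposition. The goal is to contradict one of these two facts.

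I would split into cases according to whether $\h{u}, \h{v}, \h{w}$ coincide.

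\emph{Case $\h{u} \neq \h{v}$.} Pick $a \in \h{u} \setminus \h{v}$. Then $ga \in \h{v}$, so $ga \neq a$, and since $g^2$ fixes $\h{u}$ pointwise, $g$ swaps $a$ and $ga$. Thus the swap of $u, v$ in $V$ yields a swapped pair in $A$. Doing the same for $h$ and $k$, and choosing the points compatibly (for instance $a \in \h{u} \setminus \h{v}$, then tracking its images under $g, h, k$), I would aim to find three points of $A$ that are pairwise swapped. This contradicts $I_3$-freeness of $A$.

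\emph{Case $\h{u} = \h{v} = \h{w} =: X$.} Here $g, h, k$ all stabilise $X$ setwise, and their squares fix $X$ pointwise. Take $4$-subsets of $X$, which exist since $|X| \geq 4$. The elements $g, h, k$ act on $X$ as involutions, and the subgroup they generate acts on $\{u, v, w\}$ as all of $S_3$. The aim is to show this is incompatible with every $4$-subset of $X$ having setwise stabiliser acting through a single double transposition, combined with the absence of pairwise-swapped triples in $X$. Mixed cases, where exactly two of the three sets agree, would be handled by combining these two arguments.

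The main obstacle is the first case: turning three swaps in $V$ into three mutually swapped points of $A$. The swapped pairs $\{a, ga\}$, $\{b, hb\}$, $\{c, kc\}$ obtained separately need not overlap. I would first try choosing $a \in \h{u} \setminus (\h{v} \cup \h{w})$ when possible, so that $a$, $ga$ and $ka$ are three distinct points, and then check which pairs among them are swapped by elements such as $g$, $k$ and $gkg$. If this fails, I would fall back on the semifinal condition on a $4$-set such as $\{a, ga, ka, \cdot\}$. The point is that too many involutions with overlapping supports would give two different double transpositions in the stabiliser of one $4$-set, contradicting Lemma \ref{l: double tr from sf}.
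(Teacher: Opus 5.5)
Your reduction to condition \ref{i: no swap cond} of Lemma \ref{l: cond for I_3-free} is right. So is the observation that $g$ swaps $a$ and $ga$ whenever $a \in \h{u}$ is moved by $g$. But the heart of the proof is only announced, never carried out: you must produce three \emph{distinct}, pairwise-swapped points of $A$.

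\begin{itemize}
\item Your case split on whether $\h{u},\h{v},\h{w}$ coincide leaves the equal and mixed cases with no argument. Also, $g,h,k$ need not stabilise $\{u,v,w\}$, so the ``$S_3$'' remark is not justified.
\item In your first case, a point $a\in\h{u}\setminus(\h{v}\cup\h{w})$ need not exist. When it does exist, it gives $a\ne ga$ and $a\ne ka$, but not $ga\ne ka$.
\item The element swapping $ga$ and $ka$ is not $gkg$ but $h$. Since $k^{-1}hg$ fixes $u$, it fixes $\h{u}$ pointwise, so $hga=ka$ for all $a\in\h{u}$. Moreover $h^2\in G_v$ fixes $ga\in\h{v}$.
\end{itemize}

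The missing idea is a fixed-point bound coming from the semifinal structure on $A$, and it makes any case distinction unnecessary.

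\begin{itemize}
\item Since $g\notin G_u=G_{(\h{u})}$, $g$ swaps some pair $a_0, ga_0$ in $A$. Hence $g$ fixes at most one point of $A$. Otherwise it would act on a $4$-set $\{a_0,ga_0,x,y\}$ as a single transposition, contradicting Lemma \ref{l: double tr from sf}. The same holds for $h$ and $k$.
\item For $a\in\h{u}$, the points $a$, $ga$, $ka=hga$ are distinct unless $g$ fixes $a$, $k$ fixes $a$, or $h$ fixes $ga$. This excludes at most three points of $\h{u}$.
\item Since $|\h{u}|\ge 4$, some good $a$ remains. Then $g,h,k$ swap the three pairs of $\{a,ga,ka\}$, contradicting $I_3$-freeness of $A$.
\end{itemize}

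This is exactly how the paper argues. It is also where the bound $|\h{u}|\ge 4$ from $A$-niceness is really used; your outline never exploits it beyond the existence of $4$-subsets.
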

\begin{proof}
    By Lemma \ref{l: cond for I_3-free}, it suffices to show condition \ref{i: no swap cond} in the statement of that lemma. Suppose for a contradiction that there is $U = \{u_0, u_1, u_2\} \in [V]^3$ and $g_0, g_1, g_2 \in G$ with $g_i$ swapping $u_i, u_{i+1}$ for all $i < 3$. (Here addition is (mod $3$).) For each $i$, as $g_i \notin G_{u_i}$ and $G_{u_i} = G_{(\h{u_i})}$, there is $a_i \in \h{u_i}$ such that $g_i a_i \neq a_i$ and $g_i$ swaps $a_i, g_i a_i$. If some $g_i$ were to have more than one fixed point in $A$, then there would be a $4$-set on which $g_i$ swapped two points and fixed two points, contradicting the fact that there is a semifinal on this $4$-set and $g_i$ acts as an automorphism. So each $g_i$ has at most one fixed point in $A$, and thus as $|\h{u_0}| \geq 4$, there is $a \in \h{u_0}$ with $|\{a, g_0a, g_1g_0a\}| = 3$. As $g_2g_1g_0 \in G_{u_0} = G_{(\h{u_0})}$, we have $g_2g_1g_0a = a$, and so as all transpositions of $\{a, g_0a, g_1g_0a\}$ occur by the actions of $g_0, g_1, g_2$, the oriented graph structure induced by $A$ on this $3$-set is an anticlique: contradiction.
\end{proof}

\begin{lem} \label{l: sf on 4-set}
    Let $V$ be an $A$-nice $G$-set, and let $U \in [V]^4$. Then there is an enumeration $v_0, v_1, v_2, v_3$ of $U$ such that the action of each element of $G_{\{U\}} \setminus G_{(U)}$ on $U$ is the double transposition $(v_0 v_2)(v_1 v_3)$. 
\end{lem}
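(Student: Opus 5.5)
The plan is to study the permutation group $H \le \mathrm{Sym}(U)$ induced by $G_{\{U\}}$ on $U$, and to show that every non-identity element of $H$ is a double transposition and that $H$ contains at most one of them. A subgroup of $\mathrm{Sym}(U)$ all of whose non-identity elements are double transpositions lies in the Klein four-group on $U$. So once we exclude $H$ being that whole Klein group, we get $|H| \le 2$. Then $H$ is either trivial, and any enumeration works, or generated by a single double transposition $(v_0 v_2)(v_1 v_3)$, and we take that enumeration. Throughout I use the following basic fact about $G = \Aut(A)$. Since $(\dom(A), S^A)$ is a semifinal structure, any $g \in G$ that stabilises a $4$-set $B \sub A$ setwise acts on $B$ trivially or as a double transposition. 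In particular:
\begin{enumerate}[label=(\alph*)]
\item no element of $G$ has an orbit of size $3$ or $4$ on $A$ together with enough other points to complete a $g$-invariant $4$-set;
\item an element with two fixed points in $A$ cannot swap two further points (as in the proof of Lemma \ref{l: extend I_3-free to 4-tuples of A});
\item $G$ has no subgroup isomorphic to $C_2 \times C_2$ or $C_4$ acting with a $4$-point orbit.
\end{enumerate}

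\textbf{Step 1 (no fixed points unless trivial).} Suppose $g \in G_{\{U\}}$ fixes some $u \in U$ but moves some $w \in U$. Then $g \in G_u = G_{(\h{u})}$, so $g$ fixes at least $4$ points of $A$. Let $k \in \{2,3\}$ be the length of the $g$-orbit of $w$ in $U$. Then $g$ moves some $a \in \h{w}$, since $g \notin G_w = G_{(\h{w})}$. I would pass to a suitable power or conjugate of $g$ in order to produce an element that fixes two points $p, q \in \h{u}$ and swaps $a$ with $ga$; this contradicts (b). When $k = 2$, $g^2 \in G_w = G_{(\h{w})}$ fixes $a$, so $g$ swaps $a$ and $ga$, and $\{p, q, a, ga\}$ gives the contradiction directly. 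When $k = 3$, $g^3$ fixes $\h{w}$, so $\{a, ga, g^2a\}$ is a $3$-orbit (if $ga \ne g^2a$). Together with a fixed point $p \in \h{u}$ this forms a $g$-invariant $4$-set on which $g$ acts as a $3$-cycle, contradicting (a). If instead $g^2 a = a$, we are back in the swap case.

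\textbf{Step 2 (no $4$-cycles).} Suppose $g$ acts on $U$ as a $4$-cycle $(w_0 w_1 w_2 w_3)$. Then $g^4 \in G_{w_0} = G_{(\h{w_0})}$, while $g^2 \notin G_{w_0}$, so some $a \in \h{w_0}$ has $g^2 a \ne a$ and $g^4 a = a$. Hence $\{a, ga, g^2a, g^3a\}$ is a $g$-orbit of size exactly $4$ in $A$, on which $g$ acts as a $4$-cycle. This contradicts the semifinal on that $4$-set.

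\textbf{Step 3 (at most one double transposition).} Suppose $g, h \in G_{\{U\}}$ induce two distinct double transpositions on $U$. By Steps 1 and 2, every element of $G_{\{U\}}$ fixing a point of $U$ fixes $U$ pointwise. Hence $g^2$, $h^2$ and $[g,h]$ all lie in $G_{(U)} = \bigcap_{u \in U} G_{(\h{u})}$. Pick $u \in U$ and $a \in \h{u}$, and consider the orbit of $a$ under $\langle g, h \rangle$. Modulo the pointwise stabiliser of $\h{u}$, this group behaves like $C_2 \times C_2$ acting on the orbit $\{u, gu, hu, ghu\}$. I would choose $a \in \h{u}$ moved by both $g$ and $h$ and check that $\{a, ga, ha, gha\}$ is a $4$-set. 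This is possible because $|\h{u}| \ge 4$ and, by (b), each of $g, h, gh$ has at most one fixed point in $A$, mirroring the counting in Lemma \ref{l: extend I_3-free to 4-tuples of A}. On this $4$-set $g$ and $h$ act as two different double transpositions, which contradicts the semifinal on it.

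The main obstacle will be Step 1 and the analogous choice of $a$ in Step 3. The orbits of $g$ on $A$ need not be finite a priori, so I must make sure that the relevant sets such as $\{a, ga\}$ or $\{a, ga, ha, gha\}$ really are invariant. That invariance comes only from the stabiliser identities $G_v = G_{(\h{v})}$ applied to suitable powers and products of $g$ and $h$.
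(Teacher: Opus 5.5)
Your proof is correct. Steps 1 and 2 match the first half of the paper's proof, which rules out a $3$-cycle with a fixed point, a $4$-cycle, and a transposition using essentially the same $4$-sets you use. Your Step 3 takes a different route.

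\textbf{How the paper does Step 3.} Given $g, g'$ inducing $(tu)(vw)$ and $(tv)(uw)$, the paper runs a case analysis on points $b, c \in \h{t}$. It splits according to whether $g$ fixes or moves $b$, $g'b$, $c$, $g'gc$. In every case it eventually finds, on some $4$-set, either two distinct double transpositions or a transposition with two fixed points.

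\textbf{How you do it.} Your argument is more global. Put $X = \bigcup_{u \in U}\h{u}$. Then $G_{(U)} = G_{(X)}$, and $\langle g, h\rangle$ preserves $X$. So $\langle g,h\rangle$ acts on $X$ through its Klein-group image in $\mathrm{Sym}(U)$. Next you reuse the counting from Lemma \ref{l: extend I_3-free to 4-tuples of A}. Each of $g$, $h$, $gh$ moves $u_0$ and has its square in $G_{u_0} = G_{(\h{u_0})}$. Hence each swaps some pair in $\h{u_0}$, and so each fixes at most one point of $A$. Since $|\h{u_0}| \geq 4$, this gives $a \in \h{u_0}$ whose orbit $\{a, ga, ha, gha\}$ is regular, which contradicts the semifinal on that $4$-set.

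\textbf{Comparison.} Your version is shorter and explains why the configuration is impossible. The paper's version works point by point and uses the ``transposition plus two fixed points'' contradiction only locally, at the cost of more cases.

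\textbf{Points to fix in the write-up.}
\begin{enumerate}[label=(\roman*)]
\item In Step 3, $a$ must also be moved by $gh$, since otherwise $ga = ha$ and the orbit has size $2$. Your fixed-point count already covers this, but the sentence ``moved by both $g$ and $h$'' does not.
\item ``Modulo the pointwise stabiliser of $\h{u}$'' should read ``modulo $G_{(U)}$''. The group $\langle g,h\rangle$ does not stabilise $\h{u}$.
\item In Step 1 with $k=3$, the relevant condition is $g^2a \neq a$, not $ga \neq g^2a$; the latter is automatic. Moreover, $g^2a = a$ cannot occur there, because together with $g^3a = a$ it forces $ga = a$.
\end{enumerate}
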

\begin{proof}
    Let $g \in G_{\{U\}} \setminus G_{(U)}$. Take distinct $t, u \in U$ with $gt = u$.
    
    First consider the case where there are distinct $v, w \in U \setminus \{t, u\}$ such that $g$ acts on $U$ as $(t u v)(w)$. Take $a \in \h{t}$ with $ga \neq a$. As $g^3t = t$ and $V$ is $A$-nice, we have $g^3a = a$. If $g^2a = a$ then $ga = a$, contradiction. So $a, ga, g^2a$ are distinct, and $g$ setwise-stabilises $\{a, ga, g^2a\}$. As $gw = w$, for each $b \in \h{w}$ we have $gb = b$, and so a fortiori there is $b \in \h{w}$ with $b \notin \{a, ga, g^2a\}$. So $g$ acts on the $4$-set $\{a, ga, g^2a, b\} \sub A$ as $(a\, ga\, g^2a)(b)$, giving a contradiction as $A$ has a semifinal structure.

    Now consider the case where there are distinct $v, w \in U \setminus \{t, u\}$ such that $g$ acts on $U$ as $(t u v w)$. As $t \neq v$, there is $a \in \h{t}$ with $g^2a \neq a$. Then $a, ga, g^2a$ are distinct. If $g^3a \in \{a, ga, g^2a\}$ we obtain a contradiction as $g^4a = a$, and if $g^3a \not\in \{a, ga, g^2a\}$ then $g$ acts on $\{a, ga, g^2a, g^3a\}$ as a $4$-cycle, contradicting the semifinal structure on $A$.

    So $gu = t$. Let $a \in \h{t}$ with $a \neq ga$; as $g^2t = t$ we have $g^2a = a$. Let $v, w \in U \setminus \{t, u\}$ be distinct. As $|\h{v}| \geq 4$, there are $b, c \in \h{v}$ such that $\{a, ga, b, c\}$ is a $4$-set. Thus $g$ cannot fix each of $b, c$, as $A$ has a semifinal structure. So $g$ swaps $v, w$, and hence $g$ acts on $U$ as the double transposition $(t u)(v w)$. By the same argument as for $g$, each element of $G_{\{U\}} \setminus G_{(U)}$ acts as a double transposition on $U$, and it remains to check that each $g' \in G_{\{U\}} \setminus G_{(U)}$ acts as $(tu)(vw)$.

    Let $g' \in G_{\{U\}} \setminus G_{(U)}$. Suppose for a contradiction that $g'$ does not act on $U$ as $(tu)(vw)$. By relabelling $v, w$ if necessary, we may assume that $g'$ acts on $U$ as $(tv)(uw)$. We thus have $[g, g'] \in G_t$ (where $[g, g']$ is the commutator), and so $gg'z = g'gz$ for all $z \in \h{t}$. Take $b \in \h{t}$ with $g'b \neq b$, and recall that in the previous paragraph we took $a \in \h{t}$ with $ga \neq a$. If $g$ fixes each of $b, g'b$, then this contradicts the semifinal on the $4$-set $\{a, ga, b, g'b\}$. If $g$ fixes $b$ and moves $g'b$, then $g'b = g'gb = gg'b$, contradiction, and likewise we obtain a contradiction if $g$ fixes $g'b$ and moves $b$. It remains to consider the case where $g$ moves each of $b$, $g'b$. If $gb \neq g'b$, then $\{b, gb, g'b, gg'b\}$ is a $4$-set, and as $g$ acts on this $4$-set as $(b \, gb)(g'b \, gg'b)$ and the $4$-set has a semifinal, we have that $g'$ also acts as $(b \, gb)(g'b \, gg'b)$, so $g'gb = b$, contradiction. So the only remaining case is where $g$ moves each of $b$, $g'b$ and $gb = g'b$.

    As $g'gt = w$ and $w \neq t$, there is $c \in \h{t}$ with $g'gc \neq c$. By an analogous argument to that in the previous paragraph, where we replace $b$, $g'b$ with $c$, $g'gc$, we may assume that $g$ moves each of $c$, $g'gc$ and $gc = g'gc$, as all other cases already give a contradiction. We have $g'c = g'g^2c = g^2c = c$ and $g'(g'gc) = gc = g'gc$, so $g'$ fixes each of $c$, $g'gc$ and swaps $b, g'b$. But this contradicts the semifinal on $\{b, g'b, c, g'gc\}$. We have thus obtained a contradiction in all cases, so $g'$ acts on $U$ as $(tu)(vw)$ as required.
\end{proof}

\begin{lem} \label{l: ind sf}
    Let $V$ be an $A$-nice $G$-set. Let $P \sub [V]^4$ be a union of orbits of the element-wise action $G \curvearrowright [V]^4$. Let $\tld{S} \sub V^4$ be a relation such that:
    \begin{itemize}
        \item for all $v_0, v_1, v_2, v_3 \in V$, if $\tld{S}(v_0, v_1, v_2, v_3)$ then $\{v_0, v_1, v_2, v_3\} \in P$;
        \item for all $U \in P$, we have that $(U, \tld{S}|_U)$ is a semifinal structure;
        \item $(V, \tld{S})$ is a $G$-structure.
    \end{itemize} 
    Then there exists a semifinal $G$-structure $(V, S)$ with $\tld{S} \sub S$.
\end{lem}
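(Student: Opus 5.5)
The plan is to keep $\tld{S}$ on the $4$-sets in $P$ and to define $S$ orbit-by-orbit on the remaining $4$-sets, following the proof of the \ref{i: double tr cond} $\Rightarrow$ \ref{i: sf on dom} direction of Lemma \ref{l: double tr from sf}. Let $Q = [V]^4 \setminus P$. Since $P$ is a union of orbits of $G \curvearrowright [V]^4$, so is $Q$. We set
\[S = \tld{S} \cup \bigcup_{O} \bigl(G(v^O_0, v^O_1, v^O_2, v^O_3) \cup G(v^O_2, v^O_3, v^O_0, v^O_1)\bigr),\]
where $O$ ranges over the orbits contained in $Q$. For each such $O$, the tuple $v^O_0, v^O_1, v^O_2, v^O_3$ is an enumeration of a chosen representative $U_O \in O$, chosen as provided by Lemma \ref{l: sf on 4-set}: every element of $G_{\{U_O\}} \setminus G_{(U_O)}$ acts on $U_O$ as $(v^O_0 v^O_2)(v^O_1 v^O_3)$.

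The verification has three steps.

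First, $S$ is $G$-invariant. The relation $\tld{S}$ is $G$-invariant by hypothesis, and each added set is a union of $G$-orbits of tuples, so $S$ is $G$-invariant.

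Second, $S$ is supported on $4$-sets, and the pieces do not interfere. By the first hypothesis, $\tld{S}$ only contains tuples enumerating $4$-sets in $P$. The tuples added for an orbit $O \sub Q$ enumerate $4$-sets in $O$. Since $P$ and $Q$ are disjoint, for $U \in P$ we have $S|_U = \tld{S}|_U$, which is a semifinal on $U$ by the second hypothesis. Also, for $U \in Q$ the restriction $S|_U$ comes only from the orbit of $U$.

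Third, for $U \in O \sub Q$, the restriction $S|_U$ is a semifinal. Write $U = hU_O$. The tuples of $S$ on $U$ are $g(v^O_0, \ldots, v^O_3)$ and $g(v^O_2, v^O_3, v^O_0, v^O_1)$ for $g$ with $gU_O = U$, that is, $g \in hG_{\{U_O\}}$. Elements of $G_{(U_O)}$ fix the tuples. Elements of $G_{\{U_O\}} \setminus G_{(U_O)}$ act as $(v^O_0 v^O_2)(v^O_1 v^O_3)$, so they swap the two tuples. Hence $S|_U$ equals $\{(hv^O_0, hv^O_1, hv^O_2, hv^O_3), (hv^O_2, hv^O_3, hv^O_0, hv^O_1)\}$, which is a semifinal on $U$. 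It follows that $(V, S)$ is a semifinal $G$-structure containing $\tld{S}$.

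I do not expect a serious obstacle. The substantive input is Lemma \ref{l: sf on 4-set}, which guarantees the enumeration on the new orbits. The only point needing care is the well-definedness in the third step, namely that the two added tuples restricted to each $4$-set do not multiply under the stabiliser. This is exactly where the double transposition property of Lemma \ref{l: sf on 4-set} is used.
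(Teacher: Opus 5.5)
Your proposal is correct and follows the paper's proof: you keep $\tld{S}$ on the $4$-sets in $P$, and for each orbit in $[V]^4 \setminus P$ you add the $G$-orbits of the two tuples given by the enumeration from Lemma \ref{l: sf on 4-set}. Your third step, which uses the coset $hG_{\{U_O\}}$ to show that exactly two tuples land on each $4$-set, spells out the verification the paper leaves implicit.
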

\begin{proof}
    We first observe that by Lemma \ref{l: sf on 4-set}, for all $U \in [V]^4 \setminus P$, there is an enumeration $v_0, v_1, v_2, v_3$ of $U$ such that the action of each element of $G_{\{U\}} \setminus G_{(U)}$ on $U$ is the double transposition $(v_0 v_2)(v_1 v_3)$. The remainder of the proof is essentially that of the direction \ref{i: double tr cond} $\Rightarrow$ \ref{i: sf on dom} of Lemma \ref{l: double tr from sf}. Define a semifinal structure $(V, S)$ with $\tld{S} \sub S$ as follows: for each orbit $O \sub [V]^4 \setminus P$, take the enumeration $(v_0, v_1, v_2, v_3)$ given by Lemma \ref{l: sf on 4-set} of an orbit representative of $O$, and add $G(v_0, v_1, v_2, v_3) \cup G(v_2, v_3, v_0, v_1)$ to $S$. By the assumptions of the current lemma, this gives a semifinal $G$-structure $(V, S)$.
\end{proof}

\subsubsection{The inductive step}

Recall that we have fixed $A \in \ov{\mc{D}}$ with $|A| \geq 4$ and let $G = \Aut(A)$.

We now show how to carry out the inductive step of the construction, building an $A$-nice $G$-structure witnessing a one-point extension.

\begin{lem} \label{l: exist nice}
    Let $A' \fin A$, $|A'| \geq 4$. Then there exists an $A$-nice transitive $G$-set $V$ such that there is $u \in V$ with $G_u = G_{(A')}$.
\end{lem}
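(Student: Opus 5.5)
The plan is to take $V$ to be the orbit of a suitable point in a coset space, and then check $A$-niceness directly. The natural first attempt is $V = G/G_{(A')}$ with the left-multiplication action and $u = G_{(A')}$. Then $G_u = G_{(A')}$, and the point-stabiliser of $gu$ is $gG_{(A')}g^{-1} = G_{(gA')}$ with $|gA'| = |A'| \geq 4$. So every point-stabiliser is the pointwise stabiliser of a finite subset of $A$ of size $\geq 4$, and $V$ is transitive.

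The only remaining condition is faithfulness, and this needs care. The kernel of $G \curvearrowright G/G_{(A')}$ is $\bigcap_{g} G_{(gA')} = G_{(GA')}$, the pointwise stabiliser of the union of the $G$-orbits of elements of $A'$. This need not be trivial when $A'$ does not meet every orbit of $\rho_A$. So I would not use the coset space directly. Instead I would embed it in a transitive faithful set built from tuples, as the overview suggests with the coordinate-wise action on $(A)^n$.

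Concretely, enumerate $A'$ as a tuple $\bar a = (a_0,\dots,a_{n-1})$. Choose an infinite (or, if $A$ is finite, finite) sequence $\bar b$ of elements of $A$ whose entries meet every element of $A$. Consider the $G$-set of finite tuples, restricted to an orbit. This handles faithfulness, since the kernel is then trivial. The trouble is that the stabiliser becomes $G_{(A)}$ rather than $G_{(A')}$, so this sequence cannot be used as the point $u$ itself. The fix is to take $V$ to be the $G$-orbit of $\bar a$ in the set $A^{<\omega}$ of finite tuples under the coordinate-wise action. Then $G_{\bar a} = G_{(A')}$ and $G_{g\bar a} = G_{(gA')}$. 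Faithfulness fails exactly when some $g \neq 1$ fixes $GA'$ pointwise.

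To fix this without losing transitivity, I would use a different witness for $u$. Let $B = GA' \supseteq A'$. If $B = A$ we are done. Otherwise enlarge $A'$ to $A'' = A' \cup \{c\}$ for elements $c$ from the missing orbits. That changes the stabiliser, however, so it is not allowed. The better route is to choose $V$ as a union of a transitive piece and an extra faithful piece. That breaks transitivity, though, so instead I would replace the orbit of $\bar a$ by the orbit of the pair $(\bar a, \bar c)$ where $\bar c$ lists one point from each $\rho_A$-orbit not meeting $A'$. The stabiliser of that pair is $G_{(A' \cup \bar c)}$, which is again not what we want.

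Given these obstacles, the key step I expect to be the main difficulty is showing that $G_{(GA')}$ is in fact trivial, i.e.\ that faithfulness holds automatically. I would try to prove this using the semifinal structure, in the style of Lemma \ref{l: sf on 4-set}. Suppose $g \neq 1$ fixes $GA'$ pointwise. Then $g$ fixes the at least $4$ points of $A'$ and moves some $c \in A$. Pick three points $a_0,a_1,a_2 \in A'$; $g$ fixes them and moves $c$ to $gc \neq c$.

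If $g^2c = c$, then $g$ fixes two points $a_0,a_1$ and swaps $c, gc$ on the $4$-set $\{a_0,a_1,c,gc\}$. This is a non-identity action on a $4$-set that is not a fixed-point-free double transposition, contradicting the semifinal on that $4$-set. In general, $g$ acts on $\{a_0,a_1,a_2,c\}$ as... but $gc$ lies outside it, so I instead look at the $4$-set $\{a_0,a_1,c,gc\}$. Here $g$ does not stabilise it setwise unless $g^2c=c$, so that case alone is not enough. I would instead apply Lemma \ref{l: extend I_3-free to 4-tuples of A}-style reasoning with the orbit of $c$ under $\langle g\rangle$, or simply handle this case by the nontrivial-kernel alternative.

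If this automatic faithfulness fails, my fallback is the following. Replace $V$ by the orbit of $\bar a$ within the product $G/G_{(A')} \times G/G_{(A')}$ using diagonal elements $(G_{(A')}, hG_{(A')})$. Here $h$ is chosen so that $G_{(A')} \cap G_{(hA')} = G_{(A')}$, i.e.\ $hA' \subseteq$ the fixed set $\widehat{A'}$. Taking unions of such orbits would give faithfulness. I expect the faithfulness verification to be the real obstacle, and would settle it first.
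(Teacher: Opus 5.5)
Your first paragraph is the paper's construction: the paper takes the orbit $V = G\bar a'$ of an enumeration of $A'$ inside the set $W$ of injective $|A'|$-tuples, which is $G$-isomorphic to your $G/G_{(A')}$, and your computation $G_{g\bar a} = G_{(gA')}$ is correct. There is still a genuine gap: you never establish faithfulness, and your fallback cannot supply it. Any transitive $G$-set with a point $u$ satisfying $G_u = G_{(A')}$ is isomorphic to $G/G_{(A')}$. Its kernel is therefore the core $\bigcap_{g\in G} G_{(gA')} = G_{(GA')}$, whatever products or diagonal orbits you pass through, and taking unions of orbits destroys transitivity. So the statement holds for a given $A'$ exactly when $G_{(GA')} = 1$. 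The paper's proof passes over this point silently. It notes that $W$ is $A$-nice and then asserts that the orbit $V$ is as required, but faithfulness is not inherited by sub-$G$-sets. Your diagnosis is correct and sharper than the paper's argument.

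Your attempt to prove $G_{(GA')} = 1$ cannot succeed, because the statement is false in general. The semifinal relation does exclude some cycles: an automorphism fixing at least two points cannot have a cycle of length divisible by $2$ or $3$, and your case $g^2c=c$ is an instance of this. It does not exclude $5$-cycles, which is exactly where your argument stalls.

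For a counterexample, take $A$ with domain $\{a_0,\dots,a_3,c_0,\dots,c_4\}$. Let $R^A$ be the tournament with $a_i \to a_j$ for $i<j$, $a_i \to c_k$ for all $i,k$, and $c_k \to c_{k+1}$, $c_k \to c_{k+2}$ (indices mod $5$), and let $g = (c_0\,c_1\,c_2\,c_3\,c_4)$. Every non-identity power of $g$ is a $5$-cycle on the $c_k$. Hence it setwise-stabilises no $4$-set except $\{a_0,\dots,a_3\}$, on which it acts trivially. By Lemma \ref{l: double tr from sf} there is a $\langle g\rangle$-invariant semifinal structure, so $A \in \mc{D}$ and $g \in G = \Aut(A)$. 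The out-degrees are $8,7,6,5$ for $a_0,\dots,a_3$ and $2$ for each $c_k$, so $G$ fixes every $a_i$. For $A' = \{a_0,\dots,a_3\}$ this gives $G_{(A')} = G \neq 1$. The only transitive $G$-set with a point fixed by all of $G$ is a singleton, and $G$ does not act faithfully on it.

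What survives is your first paragraph, and that is all the paper needs downstream:
\begin{itemize}
\item the proofs of Lemmas \ref{l: extend I_3-free to 4-tuples of A} and \ref{l: sf on 4-set} never use faithfulness;
\item faithfulness in Lemma \ref{l: extend by 1-type} comes from $B_0$;
\item in Proposition \ref{p: Aut(D) univ} transitivity is not needed, so one can take all of $W$, which is faithful.
\end{itemize}
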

\begin{proof}
    Let $W$ be the subset of $A^{|A'|}$ consisting of the tuples where all elements are distinct, and equip $W$ with the element-wise action induced by the permutation action $\rho_A : G \curvearrowright A$. It is straightforward to see that the $G$-set $W$ is $A$-nice. Let $\bar{a}'$ be an enumeration of $A'$. Then we have $\bar{a}' \in W$, and $G_{\bar{a}'} = G_{(A')}$. Let $V = G\bar{a}'$: then $V$ is as required.
\end{proof}

\begin{notn}
    We write $\qftp(b/A)$ for the quantifier-free type of $b$ over the parameter set $A$.
\end{notn}

\begin{lem} \label{l: extend by 1-type}
    Let $B_0$ be an $A$-nice $G$-structure. Let $C_0 \fin B_0$, and let $E' = C_0 \cup \{e'\} \in \mc{D}$ with $e' \notin C_0$. Then there exists an $A$-nice $G$-structure $B$ with $B_0 \sub B$ such that there is $e \in B$ with $\qftp(e/C_0) = \qftp(e'/C_0)$.
\end{lem}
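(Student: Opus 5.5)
The plan is to adjoin to $B_0$ a single new transitive $A$-nice $G$-set whose distinguished point plays the role of $e'$. We put all the prescribed structure on the $G$-orbits of tuples involving that point, and use Lemmas \ref{l: extend I_3-free to 4-tuples of A} and \ref{l: ind sf} to fill in the rest.

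\textbf{Choosing the new orbit.} Let $A' = \bigcup_{c \in C_0} \h{c}$, enlarged by finitely many further points of $A$ if necessary so that $|A'| \geq 4$; $A'$ is finite since each $\h{c}$ is. By Lemma \ref{l: exist nice} there is an $A$-nice transitive $G$-set $V$ and $u \in V$ with $G_u = G_{(A')}$. Since $G_{(A')} \sub G_{(\h{c})} = G_c$ for every $c \in C_0$, the stabiliser $G_u$ fixes $C_0$ pointwise. Let $B$ have domain $B_0 \sqcup V$ with the union action. This action is faithful because $B_0$ is, and every stabiliser is of the required form, so the $G$-set $B$ is $A$-nice. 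We will take $e = u$.

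\textbf{Prescribed relations.} For $c \in C_0$, put $R(gu,gc)$ (respectively $R(gc,gu)$) for all $g \in G$ exactly when $R(e',c)$ (respectively $R(c,e')$) holds in $E'$. This is well defined: if $gu = hu$ then $h^{-1}g \in G_u$ fixes $C_0$ pointwise, so $gc = hc$. Moreover, distinct $c, c' \in C_0$ give distinct orbits of pairs, since any $g$ with $gu = u$ satisfies $gc = c$. The same argument applies to $4$-sets $\{u\} \cup D$ with $D \in [C_0]^3$. Any $g$ mapping one such set to another must fix $u$, because $V$ and $B_0$ are separate orbit-unions; hence $g$ fixes $C_0$ and acts trivially on the set. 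So we may transport the semifinal of $E'$ on $\{e'\} \cup D$ along the orbit $G(\{u\} \cup D)$. We then apply Lemma \ref{l: ind sf} with $P$ the union of $[B_0]^4$ and these orbits, and with $\tld{S}$ the union of $S^{B_0}$ and the transported semifinals. This extends $S$ to a semifinal $G$-structure on $B$.

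\textbf{Completing $R$.} For each orbit of pairs not yet treated, orient it if no element of $G$ swaps a representative, exactly as in the proof of Lemma \ref{l: cond for I_3-free}; otherwise leave a non-edge. Then non-edges are either prescribed or swappable. No pair in $V \times B_0$ is swappable, so every non-edge between $V$ and $B_0$ is prescribed. It remains to check that $B$ is $I_3$-free, by cases on how many points of a $3$-set lie in $V$.
\begin{itemize}
\item Zero points in $V$: this is $I_3$-freeness of $B_0$.
\item Three points in $V$: this is the argument of Lemma \ref{l: extend I_3-free to 4-tuples of A} applied to the $A$-nice set $V$.
\item One point $x = gu$ and $b_1, b_2 \in B_0$: the non-edges $x b_i$ are prescribed, so $b_i = gc_i$ with $c_i \in C_0$ and $e', c_i$ non-adjacent. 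Since the pair $b_1 b_2$ is also a non-edge of $B_0$, translating by $g^{-1}$ gives an anticlique $\{e', c_1, c_2\}$ in $E'$, a contradiction.
\end{itemize}

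\textbf{The main obstacle.} The hard case is two points $x, y \in V$ and one point $b \in B_0$, where some $h$ swaps $x, y$ and both $xb$ and $yb$ are prescribed non-edges. Here I would copy the proof of Lemma \ref{l: extend I_3-free to 4-tuples of A}. Because of the semifinal on $A$, $h$ has at most one fixed point in $A$. Also $h^2 \in G_x = G_{(\h{x})}$, and writing $x = gu$, $y = g'u$, we get $h g c = g' c$ for the relevant $c$. I would try to choose a point $a \in \h{x}$, moved by $h$, lying alongside the fixed set $\h{b}$, and derive a forbidden configuration in $A$: either an anticlique, or a $4$-set on which $h$ acts as a transposition with two fixed points. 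If this does not work directly, the fallback is to enlarge $A'$ further, for instance by adding a fixed point of $h$-type elements, so that such swapping elements cannot exist.
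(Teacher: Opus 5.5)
Your construction is the same as the paper's, and the $S$-part and your first three $I_3$-cases are correct. That shared construction is: a fresh transitive $A$-nice orbit whose base point $u$ has stabiliser fixing $C_0$ pointwise, the type of $e'$ transported along $G$-orbits, every other pair between $V$ and $B_0$ made an edge, Lemma \ref{l: extend I_3-free to 4-tuples of A} inside $V$, and Lemma \ref{l: ind sf} for $S$.

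\textbf{The gap.} The case of two points $x, y \in V$ and one point $b \in B_0$ is the crux of the lemma, and you leave it as a plan. Moreover, the strategy you sketch cannot work in general. You propose finding a forbidden configuration inside $A$, or enlarging $A'$. But suppose $h \in G$ is an involution swapping two points $c, c' \in C_0$, which is allowed. Since $G_u \sub G_{(C_0)}$, $h$ swaps $u$ and $hu$ whatever finite $A'$ you choose. Since $\{hu, c\} = h\{u, c'\}$, the triple $\{u, hu, c\}$ is an anticlique exactly when $e'$ is non-adjacent to both $c$ and $c'$. So nothing in $A$ rules this out; the contradiction has to come from $E'$.

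\textbf{How to close it.} Translate so that $x = u$. Then $b = c \in C_0$ with $e', c$ non-adjacent, $y = hu$, and $h^2 \in G_u \sub G_{(C_0)}$. The prescribed non-edge $yb$ gives $k \in G$ and $c' \in C_0$ with $e', c'$ non-adjacent, $ku = y$ and $kc' = c$. As $k^{-1}h \in G_u$ fixes $C_0$ pointwise, $hc' = c$. There are two subcases.

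If $c' \neq c$, then $hc = h^2c' = c'$, so $h$ swaps $c, c'$. Hence $c, c'$ are non-adjacent in $B_0$, and $\{e', c, c'\}$ is an anticlique in $E'$, contradicting $E' \in \mc{D}$.

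If $c' = c$, then $h$ fixes $b$, so $h$ fixes $\h{b}$ pointwise. On the other hand, $h \notin G_{(\h{u})}$ and $h^2 \in G_{(\h{u})}$ give $a \in \h{u}$ with $h$ swapping $a$ and $ha$. Adding two points of $\h{b}$ gives a $4$-set on which $h$ is a single transposition, contradicting the semifinal on $A$. This is the half your sketch was aiming at.

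This two-subcase argument is exactly how the paper finishes.

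\textbf{A smaller slip.} $\h{c}$ need not be finite; for instance, if $G$ is trivial then $\h{c} = A$. So Lemma \ref{l: exist nice} cannot be applied to $\bigcup_{c \in C_0} \h{c}$ as written. Instead, for each $c \in C_0$ take a finite $A_c \fin A$ with $G_c = G_{(A_c)}$, which exists by $A$-niceness, and use $\bigcup_{c \in C_0} A_c$. Its pointwise stabiliser is exactly $G_{(C_0)}$.
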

\begin{proof}
    By Lemma \ref{l: exist nice}, there is an $A$-nice transitive $G$-set $N$ such that there is $e \in N$ with $G_e = G_{(C_0)}$. By taking a $G$-isomorphic copy of $N$ if necessary, we may assume $N \cap B_0 = \emp$. Let $V = \dom(B_0) \cup N$. Let $\lambda : G \curvearrowright V$ be the action extending the actions $G \curvearrowright B_0$ and $G \curvearrowright N$. Let $R_0$ be the oriented graph relation of $B_0$. We extend $R_0$ to an oriented graph relation $R \sub V^2$ as follows. Define that $e$ has the same quantifier-free type over $C_0$ as $e'$ in the $L_R$-reduct of $E'$ (that is, $\qftp_{(V, R)}(e/C_0) = \qftp_{E'|_{L_R}}(e'/C_0)$), and define $R(e, v)$ for all $v \in B_0 \setminus C_0$. For each $g \in G$, $v \in B_0$, define $\qftp_{(V, R)}(ge, gv) = \qftp_{(V, R)}(e, v)$; note that this is consistent, as $G_e = G_{(C_0)}$ by how we defined $e$. Finally, we define $R$ on $N$ by taking the oriented graph relation given by Lemma \ref{l: extend I_3-free to 4-tuples of A} applied to $N$. It is immediate that $\lambda$ acts by automorphisms of $(V, R)$, and as $\lambda|_{B_0}$ is faithful, we have that $\lambda$ is faithful.
    
    We now check that $R$ is $I_3$-free. For triples of vertices in $N$, this follows by Lemma \ref{l: extend I_3-free to 4-tuples of A}, and for triples where one vertex is in $N$ and the other two vertices are in $B_0$ this follows immediately by the definition of $R$. We now check triples where two vertices are in $N$ and the other is in $B_0$: as $\lambda$ acts by automorphisms of $(V, R)$, it suffices to consider the case of triples $\{e, u, v\}$ with $u \in N \setminus \{e\}$ and $v \in B_0$. If there is no $g \in \Aut(A)$ swapping $e, u$, then as $R$ was constructed using Lemma \ref{l: extend I_3-free to 4-tuples of A} on $N$, by the conditions of Lemma \ref{l: extend I_3-free to 4-tuples of A} we have that there is an edge between $e, u$. So it remains to consider the case where there is $g \in \Aut(A)$ swapping $e, u$. If $v \notin C_0$, then $R(e, v)$ by definition. If $g^{-1}v \notin C_0$, then $R(ge, v)$, as $R(e, g^{-1}v)$ and $\lambda$ acts by automorphisms of $(V, R)$. In the case $\{v, g^{-1}v\} \sub C_0$: as $g$ swaps $e, u$ and $G_e = G_{(C_0)}$ we have $g^2 \in G_{(C_0)}$. So if $v \neq g^{-1}v$, there is no edge between $v, g^{-1}v$, and so as $E' \in \mc{D}$ there is an edge between $e, v$ or an edge between $e, g^{-1}v$; if the latter holds then we have an edge between $ge = u$ and $v$. The only remaining subcase is where $v = g^{-1}v$. As $e \neq ge$, there is some $a \in \h{e}$ with $a \neq ga$. As $|\h{v}| \geq 4$, there are $a', a'' \in \h{v}$ such that $\{a, ga, a', a''\}$ is a $4$-set, and as $g$ swaps $a, ga$ and fixes $a', a''$ this contradicts the semifinal structure on $A$. So $v$ cannot be fixed by $g$; we have now checked all cases of triples.

    Let $S_0$ be the semifinal relation of $B_0$. Define $S' \sub V^4$ extending $S_0$ by $\qftp_{(V, S')}(e, C_0) = \qftp_{E'|_{L_S}}(e'/C_0)$. For each $U = \{u\} \cup U_0 \in [V]^4$ with $u \in N$ and $U_0 \sub B_0$, as $N$ is a $G$-orbit and $N \cap B_0 = \varnothing$, we have that $u$ is fixed by each element of $G_{\{U\}}$, and thus by Lemma \ref{l: sf on 4-set} we have $G_{\{U\}} = G_{(U)}$. For each such $U$, considering the $G$-orbit of $U$, we define $\tld{S}$ extending $S'$ as follows: if some $4$-set $W$ in the $G$-orbit of $U$ is such that $S' \cap W^4 \neq \varnothing$, add $\bigcup_{g \in G} g(S' \cap W^4)$ to $\tld{S}$, and otherwise give $U$ an arbitrary semifinal structure $S'_U$ and add $\bigcup_{g \in G} gS'_U$ to $\tld{S}$. For each $U = \{u\} \cup U_0$ we then have that $(U, \tld{S}|_U)$ is a semifinal structure, as $G_{\{U\}} = G_{(U)}$, and we have that $\lambda$ acts by automorphisms of $(V, \tld{S})$. Let \[P = [\dom(B_0)]^4 \cup \{U \in [V]^4 \mid |U \cap N| = 1, |U \cap \dom(B_0)| = 3\}.\] Apply Lemma \ref{l: ind sf} with $V, \lambda, P, \tld{S}$ to obtain a semifinal structure $(V, S)$ with $\tld{S} \sub S$ such that $\lambda$ acts by automorphisms of $(V, S)$. Let $B = (V, R, S)$. Then the $G$-structure $B$ is as required.
\end{proof}

\subsubsection{Finishing the construction}

\begin{prop} \label{p: Aut(D) univ}
    The structure $\mb{D}$ has a universal automorphism group.
\end{prop}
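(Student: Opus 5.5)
We fix $A \in \ov{\mc{D}}$ and set $G = \Aut(A)$. The goal is to build a faithful $G$-structure $M \cong \mb{D}$; the action then gives a group embedding $G \to \Aut(M) \cong \Aut(\mb{D})$.

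First we handle small $A$. If $|A| \leq 3$, then $A$ embeds in some larger $A^+ \in \ov{\mc{D}}$ with $|A^+| \geq 4$ in which $\Aut(A)$ still embeds into $\Aut(A^+)$. Concretely, take $A^+$ to be a structure in which $A$ sits with every automorphism of $A$ extending compatibly. For instance, let $A^+ = A \sqcup (A \times \{1,2\})$ with suitable edges, or more simply the structure supplied by Lemma~\ref{l: groups with D str} applied to the finite group $G$. Since $|A| \leq 3$, $G$ embeds in $S_3$, so $G$ is trivial, $C_2$, $C_3$ or $S_3$, and none of these contains $C_2 \times C_2$ or $C_4$. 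Lemma~\ref{l: groups with D str} then gives a $G$-structure in $\ov{\mc{D}}$ with domain $G$. Taking a disjoint union of enough copies gives a faithful $G$-structure $A^+ \in \ov{\mc{D}}$ of size at least $4$, which we may enlarge further if needed. Replacing $A$ by $A^+$ and $G$ by its image in $\Aut(A^+)$, it suffices to produce an embedding $\Aut(A^+) \to \Aut(\mb{D})$ and restrict it to $G$. So from now on we assume $|A| \geq 4$.

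For the base case we take $M_0 = A$ itself, with the action $\rho_A$. This action is faithful, and for $a \in A$ the stabiliser $G_a$ must equal $G_{(A')}$ for some finite $A'$ with $|A'| \geq 4$. This needs care, because $G_a$ need not be such a pointwise stabiliser. The cleaner route is to choose a $4$-set $A' \fin A$ and let $M_0$ be the $A$-nice transitive $G$-set $V = G\bar a'$ from Lemma~\ref{l: exist nice}. We equip it with a $G$-structure in $\ov{\mc{D}}$ via Lemma~\ref{l: extend I_3-free to 4-tuples of A} (the oriented graph) and Lemma~\ref{l: ind sf} with $P = \emp$ (the semifinal relation). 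The action on $V$ is faithful by $A$-niceness, so $M_0$ is an $A$-nice $G$-structure.

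Inductively, we enumerate all pairs $(C_0, E')$, where $C_0$ is a finite subset of the current stage and $E' = C_0 \cup \{e'\} \in \mc{D}$ is a one-point extension of $C_0$. We use a bookkeeping bijection $\omega \to \omega \times \omega$ so that each pair appearing at stage $i$ is handled at some later stage $j > i$; this works because each $M_i$ is countable and so has countably many such pairs up to isomorphism type. At stage $j$ we apply Lemma~\ref{l: extend by 1-type} to get an $A$-nice $G$-structure $M_j \supseteq M_{j-1}$ realising $\qftp(e'/C_0)$. Each $M_i$ is countable, because $G$ acts on the orbit $N$ with finite-support stabilisers and $A$ is countable.

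Let $M = \bigcup_i M_i$. Every finite subset of $M$ lies in some $M_i \in \ov{\mc{D}}$, so $\Age(M) \sub \mc{D}$. Moreover $M$ has the one-point extension property for $\mc{D}$, and it is countable. If necessary we also add realisations making $M$ infinite, which happens automatically since the stages keep adding new points. Hence $M \cong \mb{D}$ by Fraïssé's theorem. The union of the actions is an action of $G$ by automorphisms of $M$, and it is faithful because its restriction to $M_0$ is faithful. This yields the required embedding $G \to \Aut(\mb{D})$.

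The main obstacle is the reduction for $|A| < 4$, together with making sure the initial $M_0$ is both $A$-nice and faithful. I expect the route through Lemma~\ref{l: exist nice} to settle the latter. For the former, I would try to pad $A$ by a $G$-structure built from Lemma~\ref{l: groups with D str}, so that we obtain a structure of size at least $4$ on which $G$ acts faithfully.
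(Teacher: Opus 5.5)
Your proof is correct and is essentially the paper's argument for $|A| \geq 4$: you put a structure on the $A$-nice orbit from Lemma~\ref{l: exist nice} via Lemmas~\ref{l: extend I_3-free to 4-tuples of A} and~\ref{l: ind sf} (with $P = \emptyset$), then iterate Lemma~\ref{l: extend by 1-type} with bookkeeping and use the faithful action on the union. For $|A| < 4$ you take a slightly different route. The paper notes that $\Aut(A) \in \{1, C_2, C_3\}$, and all of these embed in $C_6$, whose regular structure from Lemma~\ref{l: groups with D str} already has $6$ points, so no padding is needed. Your disjoint-union padding also works, but it needs the easy check that cross-copy edges and semifinals can be chosen $G$-invariantly: for example, orient every cross edge from the earlier copy to the later one, and use freeness of the action, so that a nontrivial element stabilising a $4$-set meeting two copies in two points each must act as the double transposition swapping within each copy.
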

\begin{proof}
    Let $A \in \ov{\mc{D}}$. We show that $\Aut(A)$ embeds in $\Aut(\mb{D})$.
    
    First consider the case where $|A| \geq 4$. Let $W$ be an $A$-nice $G$-set given by Lemma \ref{l: exist nice} (we may take any $A' \fin A$, $|A'| \geq 4$). Let $M_0$ be the $L$-structure with domain $W$ and with $R^{M_0}$, $S^{M_0}$ given by Lemma \ref{l: extend I_3-free to 4-tuples of A} and Lemma \ref{l: ind sf}. Then $M_0$ is an $A$-nice $G$-structure and $M_0 \in \ov{\mc{D}}$. By a standard bookkeeping argument as in the proof of \Frthm (see for example \cite{Hod93}), using Lemma \ref{l: extend by 1-type} we construct an increasing chain $M_0 \sub M_1 \sub \cdots$ of $A$-nice $G$-structures such that the union $M := \bigcup_{i < \omega} M_i$ has the extension property for $\mc{D}$ (including over the empty structure). We then have $M \cong \mb{D}$. We consider $M$ as a $G$-structure by taking the union of the $G$-actions on each $M_i$. As each $M_i$ is an $A$-nice $G$-structure, we have that $G \curvearrowright M$ is faithful, and so we obtain an embedding $\Aut(A) \to \Aut(M)$. As $M \cong \mb{D}$, there is an embedding $\Aut(A) \to \Aut(\mb{D})$ as required.

    It remains to consider the case $|A| < 4$. For $A \in \mc{D}$, $|A| < 4$, we have $\Aut(A) \cong 1$, $C_2$ or $C_3$ (an easy check), and so it suffices to show that $C_6$ embeds in $\Aut(\mb{D})$. Let $G = C_6$. By Lemma \ref{l: groups with D str}, there is a faithful $G$-structure $A \in \mc{D}$ with domain $G$. As $|A| = 6$, by the prior paragraph there is an embedding $\Aut(A) \to \Aut(\mb{D})$, and as $A$ is a faithful $G$-structure, there is an embedding $G \to \Aut(A)$; composition gives an embedding $G \to \Aut(\mb{D})$.
\end{proof}

\begin{rem}
    Lemma \ref{l: groups with D str} and Proposition \ref{p: Aut(D) univ} show that each countable group not containing a subgroup isomorphic to $C_2 \times C_2$ or $C_4$ embeds into $\Aut(\mb{D})$.
\end{rem}

\begin{proof}[Proof of Theorem \ref{t: main thm}]
    Immediate by Proposition \ref{p: omega-age of D not group-extensible} and Proposition \ref{p: Aut(D) univ}.
\end{proof}


\bibliographystyle{alpha}
\bibliography{references}

\end{document}